\edef\gaussR{0}
\edef\gaussA{0}
    \def\pgfmathresult{0.00001}%
  \global\let\gaussR=\pgfmathresult
  \global\let\gaussA=\pgfmathresult
  \pgfmathmultiply@{\pgfmathresult}{\gaussR}%
  \pgfmathmultiply@{\gaussR}{\pgfmathresult}%
  \pgfmathmultiply@{\pgfmathresult}{-2}%
  \let\@radius=\pgfmathresult%
  \pgfmathmultiply@{\pgfmathresult}{\@radius}%
    \def\pgfmathresult{0.00001}%
  \let\@tmp=\pgfmathresult%
    \def\pgfmathresult{0.00001}%
  \pgfmathinvgauss@{\pgfmathresult}{\@tmp}%
\title{Laws of Large Numbers for Information Resolution}
\begin{document}

\author{Daniel Raban\footnote{Email: danielraban@berkeley.edu}}
\affil{Department of Statistics, University of California, Berkeley}


\maketitle



\abstract{Laws of large numbers establish asymptotic guarantees for recovering features of a probability distribution using independent samples. We introduce a framework for proving analogous results for recovery of the $\sigma$-field of a probability space, interpreted as information resolution—the granularity of measurable events given by comparison to our samples. Our main results show that, under iid sampling, the Borel $\sigma$-field in $\R^d$ and in more general metric spaces can be recovered in the strongest possible mode of convergence. We also derive finite-sample $L^1$ bounds for uniform convergence of $\sigma$-fields on $[0,1]^d$.

We illustrate the use of our framework with two applications: constructing randomized solutions to the Skorokhod embedding problem, and analyzing the loss of variants of random forests for regression.}

\section{Introduction}

Laws of large numbers generally assert that, in the context of iid sampling, we can asymptotically recover aspects of our probability space. For example, the strong and weak laws of large numbers assert that we can recover the mean of a measure $\mu$, and, perhaps more ambitiously, the Glivenko--Cantelli theorem guarantees recovery of the entire measure via its cumulative distribution function.

Inconspicuously absent from these theorems is the following consideration: Given a probability space $(S,\mc B,\mu)$ can recover the measure $\mu$ we were sampling from, but what about the $\sigma$-field $\mc B$? Does the information of our samples $X_i$ allow us to measure the same resolution of events as the unknown process associated to the samples?

The goal of this paper is to introduce a notion of laws of large numbers regarding recovery of the \emph{information resolution}, as represented by the notion of $\sigma$-fields, associated to the target measure $\mu$ generating our iid samples. Just as one approximates the underlying mean by a sample mean or the underlying CDF by an empirical CDF, we will approximate the underlying $\sigma$-field by \emph{empirical $\sigma$-fields}, representing the granularity of the events we can measure by comparison to our samples.

We will prove examples of these laws of large numbers in settings such as sampling in $\R^d$ and in more general metric spaces. We will also present two applications of our theory. The first gives a simple method for randomly generating solutions to the Skorokhod embedding problem by constructing stopping times for Brownian motion through sequences of hitting barriers, interpreted as increasingly resolving partitions of $\R$. The second applies our theory to random forests, analyzing how regression tree loss depends on tree depth by viewing feature space splits as progressively finer resolutions.

Here is a basic example illustrating the notion of information resolution.

\begin{ex}
Suppose we sample $X_1,X_2,X_3 \overset{\on{iid}}{\sim} \mu$ and get the values $X_1 = 5$, $X_2 = -4$, and $X_3 = 1$. What is the empirical resolution afforded by the knowledge of our three sample values? One choice is as follows: If we were to continue sampling $Z_1,Z_2,\dots \overset{\on{iid}}{\sim} \mu$, we would be able to compare the values of the $Z_i$ with our original sample values $X_1, X_2, X_3$. We would be able to determine events such as $\{ X_2 < Z_i \leq X_3\}$.
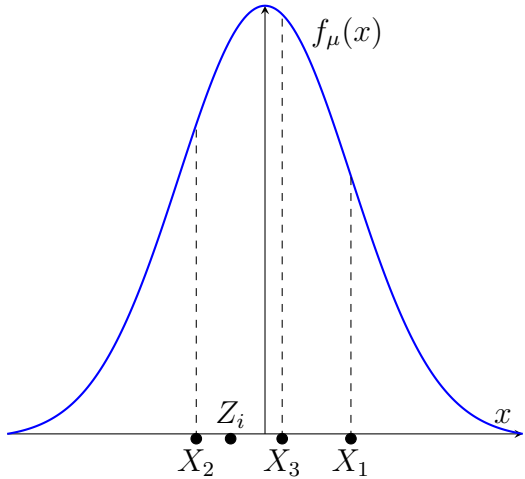
\begin{figure}[h]
\begin{center}
\begin{tikzpicture}
\begin{axis}[    domain=-15:15,    samples=100,    axis lines=middle,    xlabel=$\: x$,    ylabel=$\: \: \: \:\: f_\mu(x)$,    xtick=\empty,    ytick=\empty,    clip=false,    ]
\addplot [smooth, color=blue, thick] {exp(-x^2/50)/(5*sqrt(2*pi))};
\draw[dashed] (axis cs: 5,0) -- (axis cs: 5,{exp(-25/50)/(5*sqrt(2*pi))}) node[above] {};
\draw[dashed] (axis cs: -4,0) -- (axis cs: -4,{exp(-16/50)/(5*sqrt(2*pi))}) node[above] {};
\draw[dashed] (axis cs: 1,0) -- (axis cs: 1,{exp(-1/50)/(5*sqrt(2*pi))}) node[above] {};
\filldraw (axis cs: 5,0) circle (2pt) node[below] {$X_1$};
\filldraw (axis cs: -4,0) circle (2pt) node[below] {$X_2$};
\filldraw (axis cs: 1,0) circle (2pt) node[below] {$X_3$};
\filldraw (axis cs: -2,0) circle (2pt) node[above] {$Z_i$};
\end{axis}
\end{tikzpicture}
\end{center}
\caption{Comparing a new sample $Z_i$ to the previous samples $X_1,X_2,X_3$.}
\end{figure}

From this perspective, the $\sigma$-field representing the resolution given by our first three samples is the $\sigma$-field generated by the partition
$$\mc F_3 \coloneqq \sigma((-\infty,-4], (-4,1], (1,5], (5,\infty) ).$$
Alternatively, we can express this $\sigma$-field more directly in terms of our samples using the sets $(-\infty,X_i]$:
$$\mc F_3 = \sigma((-\infty,5], (-\infty,-4], (-\infty,1]).$$
Defining empirical $\sigma$-fields in this way, i.e.\ $\mc F_n \coloneqq \sigma((-\infty,X_1],\dots,(-\infty,X_n])$, we can measure more events as we obtain more samples, increasing the granularity of our information resolution. And as we let $n \to \infty$, we might hope that we can measure any event.
\end{ex}

Care must be taken, however, when defining a notion of empirical information resolution, as not every sequence of $\sigma$-fields will recover the maximal $\sigma$-field of the probability space. Here is a naive example illustrating this point.

\begin{ex}
When sampling $X_1,X_2,X_3 \overset{\on{iid}}{\sim} \mu = \on{Unif}[0,1]$, we could define $\mc G_n \coloneqq \sigma(\{ X_1\},\dots,\{ X_n\})$. This, at best, generates a sub-$\sigma$-field of $\mc C$, the $\sigma$-field of countable and co-countable subsets of $[0,1]$. Moreover, all sets in $\mc C$ have Lebesgue measure 0 or 1, so from the perspective of Lebesgue measure on $[0,1]$, we have not gained any resolution at all. The sequence $\mc G_n$ of empirical $\sigma$-fields would only be sufficient for recovering the resolution of our space if the probability measure $\mu$ were discrete.
\end{ex}

In general, the setup for $\sigma$-field recovery is as follows: draw iid samples $X_1, X_2, \dots \overset{\on{iid}}{\sim} \mu$, taking values in a space $S$. To each $x \in S$, we associate a set $A_x$ that reflects the resolution or information revealed by observing $x$. These sets encode our assumption about the underlying structure, with the goal of recovering the maximal $\sigma$-field $\mc F \coloneqq \sigma(\{A_x : x \in S\})$. We define the \emph{empirical resolution} $\sigma$-fields $\mc F_n \coloneqq \sigma(A_{X_1}, \dots, A_{X_n})$, based on the first $n$ samples. The central question is whether $\mc F_n$ converges to $\mc F$ as $n \to \infty$, under an appropriate notion of convergence for $\sigma$-fields.

Convergence of $\sigma$-fields has been well-studied (see e.g.\ \cite{boylan1971equiconvergence,neveu1972note,kudo1974note,rogge1974uniform,van1993hausdorff,vidmar2018couple}), and there are a number of non-equivalent modes of convergence. Most of these modes of convergence involve comparing the $\sigma$-fields using a fixed measure $\mu$, which we will usually assume to be the shared marginal distribution of our iid samples. We list some modes of convergence here; for a more in-depth study of how these notions relate to each other, see \cite{vidmar2018couple}, for example.

\begin{itemize}

\item Monotone convergence: $\mc F_n \to \mc F$ in the monotone sense (written $\mc F_n \uparrow \mc F$) means that $\bigvee_{n=1}^\infty \mc F_n = \mc F$. Here, $\bigvee_{n=1}^\infty \mc F_n$ is the \emph{join} of these $\sigma$-fields with respect to inclusion; that is, it is the smallest $\sigma$-field containing $\mc F_n$ for each $n$.

\item Hausdorff convergence: Given a fixed probability measure $\mu$, $\mc F_n \to \mc F$ in the Hausdorff sense means that
$$d_\mu(\mc F_n, \mc F) \coloneqq \sup_{\| f\|_{L^\infty(\mu)} \leq 1} \| \E[ f \mid \mc F_n] - \E[f \mid \mc F] \|_{L^1(\mu)} \to 0.$$
This is equivalent \cite{rogge1974uniform} to
$$d'_\mu(\mc F_n, \mc F) \coloneqq \max \brac{\sup_{A \in \mc F_n} \inf_{B \in \mc F} \mu(A \triangle B), \sup_{B \in \mc F} \inf_{A \in \mc F_n} \mu(A \triangle B)} \to 0,$$
which is convergence of the sets $\mc F_n$ to $\mc F$ in the Hausdorff topology induced by viewing these $\sigma$-fields as closed subsets of $L^1$ (via indicator functions of sets).

\item Set theoretic convergence: This means  $\limsup_{n \to \infty} \mc F_n = \liminf_{n \to \infty} \mc F_n = \mc F$, where
$$\limsup_{n \to \infty} \mc F_n \coloneqq \bigcap_{n=1}^\infty \bigvee_{k=n}^\infty \mc F_n, \qquad \liminf_{n \to \infty} \mc F_n \coloneqq \bigvee_{n=1}^\infty \bigcap_{k=n}^\infty \mc F_n.$$

\item Strong convergence: This means $\E[ \mbbm 1_A \mid \mc F_n] \to \E[\mbbm 1_A \mid \mc F]$ in probability for all measurable $A$.

\end{itemize}
In general, monotone and Hausdorff convergence, which are not equivalent, are the strongest. Here is a diagram expressing the strength of various modes of convergence, including some not mentioned above; for a more complete picture, see \cite{vidmar2018couple}.
$$\begin{tikzcd}
& \text{Monotone} \arrow[d] \arrow[dl] \\
\text{Almost sure} \arrow[dr] & \text{Set theoretic} \arrow[d] & \text{Hausdorff} \arrow[dl] \\
& \text{Strong} \arrow[d] \\
& \text{Weak}
\end{tikzcd}$$

Hausdorff convergence, which is given by a pseudometric, may at first seem the most natural to use for an analogue of the Glivenko--Cantelli theorem, due to its uniform nature. However, we will see in Section~\ref{uniform-section} that Hausdorff convergence fails for even simple examples in $\R$. Monotone convergence, which appears regularly in probability theory (for example, in the context of martingale convergence), is another natural choice and will suffice in cases where Hausdorff convergence is not possible.

\paragraph*{Outline.} In what follows, we will prove laws of large numbers for two modes of convergence of $\sigma$-fields.

\begin{itemize}[label = -]

\item In Section~\ref{monotone-section}, we prove theorems for monotone convergence of $\sigma$-fields in $\R^d$ and in more general metric spaces. This gives the strongest convergence possible, as monotone convergence implies all studied modes of convergence for $\sigma$-fields which do not imply Hausdorff convergence.

\item In Section~\ref{uniform-section}, we prove a weakened version of Hausdorff convergence (and give quantitative rates) by restricting the class of test functions to Lipschitz functions, rather than all of $L^\infty(\mu)$; in other words, we give bounds on
$$\sup_{\| f \|_{\on{Lip}} \leq 1} \| \E[ f \mid \mc F_n] - \E[ f \mid \mc F] \|_{L^1(\mu)}.$$

\item In Section~\ref{app-section}, we apply our theorems to construct randomized solutions to the Skorokhod embedding problem and to analyze the loss of randomized regression trees. These applications use our theorems from Sections~\ref{monotone-section} and \ref{uniform-section}, respectively.

\end{itemize}

It is important to note that there are two layers of randomness at play: We want to study a probability space $(S,\mc F,\mu)$, but we are generating the samples $X_1,X_2,\dots \overset{\on{iid}}{\sim} \mu$ via some background probability space $(\Omega, \mc G,\P)$. Just as classical laws of large numbers concern $\P$-a.s.\ convergence of numbers or random measures, our theorems will concern $\P$-a.s.\ and $L^1(\P)$ convergence of random $\sigma$-fields.

\section{Monotone convergence of resolution} \label{monotone-section}

When studying the convergence of $\sigma$-fields, we want to compare $\sigma$-fields by measuring the distance between sets with respect to a \emph{fixed} measure $\mu$. The measure $\mu$ can't meaningfully distinguish between two sets $A,B$ with $\mu(A \triangle B) = 0$, so we will need to be precise with our statements. However, the following definition and subsequent proposition tell us that this technicality poses no obstruction to our understanding.

\begin{defn}
Let $(S,\mc F,\mu)$ be a measure space, and let $\mc A,\mc B \subseteq \mc F$. We say that $\mc A$ and $\mc B$ \emph{differ only by $\mu$-null sets} if
\begin{enumerate}[label = (\roman*)]
\item $\forall A \in \mc A, \exists B \in \mc B \text{ s.t.\ } \mu(A \triangle B) = 0$,
\item $\forall B \in \mc B, \exists A \in \mc A \text{ s.t.\ } \mu(A \triangle B) = 0$.
\end{enumerate}
\end{defn}

We will make judicious use of the generating construction for $\sigma$-fields: $\sigma(\mc A)$ denotes the smallest $\sigma$-field containing all the sets in $\mc A$, and we say that $\mc A$ \emph{generates} $\sigma(\mc A)$. Before proving any results, we must first make sure that altering generating sets by null sets does not cause any issues when generating $\sigma$-fields.

\begin{prop} \label{null-generating-fine}
Let $(S,\mc F,\mu)$ be a measure space, and let $\mc A,\mc B \subseteq \mc F$ differ only by $\mu$-null sets. Then $\sigma(\mc A)$ and $\sigma(\mc B)$ differ only by $\mu$-null sets.
\end{prop}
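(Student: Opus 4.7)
The plan is to prove each of the two ``differ only by null sets'' conditions separately by a monotone-class--style argument: fix one of the two $\sigma$-fields as a target and show that the collection of sets in the other $\sigma$-field which are $\mu$-approximable (exactly, by a null symmetric difference) forms a $\sigma$-field containing a known generator. By symmetry, we only need to do one direction.

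Concretely, define
\[
\mc C \coloneqq \bigl\{ C \in \sigma(\mc A) : \exists D \in \sigma(\mc B) \text{ s.t.\ } \mu(C \triangle D) = 0 \bigr\}.
\]
The hypothesis that $\mc A$ and $\mc B$ differ only by $\mu$-null sets gives $\mc A \subseteq \mc C$, since any $A \in \mc A$ has a partner $B \in \mc B \subseteq \sigma(\mc B)$ with $\mu(A \triangle B) = 0$. It remains to show $\mc C$ is itself a $\sigma$-field, which will force $\sigma(\mc A) \subseteq \mc C$, and hence $\sigma(\mc A) = \mc C$, proving the first half of the conclusion.

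The verification that $\mc C$ is a $\sigma$-field is the routine part: $\emptyset$ matches with $\emptyset$; if $C \in \mc C$ with partner $D$, then $C^c$ pairs with $D^c$ since $\mu(C^c \triangle D^c) = \mu(C \triangle D) = 0$; and if $\{C_n\} \subseteq \mc C$ with corresponding partners $\{D_n\} \subseteq \sigma(\mc B)$, then $\bigcup_n C_n \in \sigma(\mc A)$ is partnered with $\bigcup_n D_n \in \sigma(\mc B)$, using the standard bound
\[
\mu\Bigl( \bigcup_n C_n \,\triangle\, \bigcup_n D_n \Bigr) \;\leq\; \sum_n \mu(C_n \triangle D_n) \;=\; 0.
\]
The analogous $\sigma$-field, with the roles of $\mc A$ and $\mc B$ swapped, handles the other direction.

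I do not expect a genuine obstacle here; the only subtle point is making sure the ``partner'' set for a given $C \in \sigma(\mc A)$ is actually in $\sigma(\mc B)$ (rather than merely in some larger $\sigma$-field), which is exactly why $\mc C$ is defined to require $D \in \sigma(\mc B)$ rather than $D \in \mc F$. The countable subadditivity estimate on symmetric differences is the only nontrivial ingredient, and it is standard.
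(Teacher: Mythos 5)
Your proposal is correct and follows essentially the same route as the paper's proof: the same collection $\{ C \in \sigma(\mc A) : \exists D \in \sigma(\mc B),\ \mu(C \triangle D) = 0\}$, the same verification that it is a $\sigma$-field (with the countable subadditivity bound on symmetric differences), and the same appeal to symmetry for the reverse inclusion. No issues to flag.
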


\begin{proof}
Let $\mc F \coloneqq \{ A \in \sigma(\mc A) : \exists B \in \sigma(\mc B) \text{ s.t.\ } \mu(A \triangle B) = 0\}$ be the members of $\sigma(\mc A)$ which are represented in $\sigma(\mc B)$ up to null sets. Then $\mc F$ is a $\sigma$-field:
\begin{enumerate}[label = (\roman*)]

\item Empty set: $\varnothing \in \mc F$ because $\varnothing \in \sigma(\mc A)$ and $\sigma(\mc B)$.

\item Complements: If $A \in \mc F$, then letting $B$ be such that $\mu(A \triangle B) = 0$, we get $\mu(A^c \triangle B^c) = 0$. As $B^c \in \sigma(\mc B)$, we get $A^c \in \mc F$.

\item Countable unions: If $A_1,A_2,\dots \in \mc F$, then let $B_1,B_2,\dots \in \sigma(\mc B)$ be such that $\mu(A_i \triangle B_i) = 0$ for $i  \geq 1$. Then
$$\mu \paren{ \paren{\bigcup_{i =1}^\infty A_i } \triangle \paren{\bigcup_{i=1}^\infty B_i}} \leq \mu \paren{ \bigcup_{i =1}^\infty A_i \triangle B_i} \leq \sum_{i=1}^\infty \mu(A_i \triangle B_i) = 0,$$
so $\bigcup_{i =1}^\infty A_i  \in \sigma(\mc A)$.

\end{enumerate}
$\mc F$ is a $\sigma$-field that contains $A$, so $\mc F \supseteq \sigma(\mc A)$. Hence, $\mc F = \sigma(\mc A)$. The same argument shows that all members of $\sigma(\mc B)$ are represented in $\sigma(\mc A)$ up to null sets.
\end{proof}

\subsection{Monotone convergence of resolution in $\R^d$}

In this section, we prove a basic law of large numbers for recovering the Borel $\sigma$-field in $\R^d$, using the left-infinite intervals/boxes which show up in the Glivenko--Cantelli theorem.

\begin{thm} \label{classical R^n G-C}
Let $(\R^d,\mc B,\mu)$ be a probability space equipped with the Borel $\sigma$-field, and let $X_1,X_2,\dots \overset{\on{iid}}{\sim} \mu$. For $x = (x_1,\dots,x_d) \in \R^d$, let $A_x \coloneqq (-\infty,x_1] \times \cdots \times (-\infty,x_d]$, and define the empirical $\sigma$-fields $\mc F_n \coloneqq \sigma(A_{X_1},\dots,A_{X_n})$. Then $\mc F_n \uparrow \mc B$ a.s.; that is, $\bigvee_{n=1}^\infty \mc F_n$ and $\mc B$ differ only by $\mu$-null sets.
\end{thm}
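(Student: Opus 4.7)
The plan is to reduce to showing that a countable generating family for $\mc B$ is $\mu$-a.e.\ measurable with respect to $\bigvee_{n=1}^\infty \mc F_n$, then use a conditional-expectation rounding argument to upgrade $L^1(\mu)$ approximations into exact representatives. Since $\{A_q : q \in \Q^d\}$ is a countable family generating $\mc B$, Proposition~\ref{null-generating-fine} reduces the task to showing that, almost surely, for every $q \in \Q^d$ there exists $C_q \in \bigvee_{n=1}^\infty \mc F_n$ with $\mu(A_q \triangle C_q) = 0$; countability of $\Q^d$ lets me treat each $q$ separately and intersect probability-$1$ events. For fixed $q$, if I can exhibit a sequence $C_n \in \mc F_n$ with $\mu(A_q \triangle C_n) \to 0$ a.s., then $\mbbm 1_{C_n} \to \mbbm 1_{A_q}$ in $L^1(\mu)$; since conditional expectation onto $\bigvee_n \mc F_n$ is an $L^1$-contraction fixing each $\mbbm 1_{C_n}$, uniqueness of $L^1(\mu)$ limits forces $\E[\mbbm 1_{A_q} \mid \bigvee_n \mc F_n] = \mbbm 1_{A_q}$ $\mu$-a.s. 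This conditional expectation equals its own square and so is $\{0,1\}$-valued $\mu$-a.s., yielding $C_q := \{\E[\mbbm 1_{A_q} \mid \bigvee_n \mc F_n] = 1\}$ as the desired representative.

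To construct the approximating sequence for a fixed $q$, set $q^{(k)} := q + (1/k,\dots,1/k)$ for $k \geq 1$ and define
$$C_n^{(k)} := \bigcap\bigl\{A_{X_i} : i \leq n,\ q \leq X_i \leq q^{(k)}\ \text{coordinate-wise}\bigr\} \in \mc F_n,$$
interpreting the empty intersection as $\R^d$. By the product structure of the rectangles, $C_n^{(k)} = \prod_j (-\infty, m_n^{j,k}]$ with $m_n^{j,k} := \min\{X_i^j : X_i \in [q, q^{(k)}],\ i \leq n\}$, and $A_q \subseteq C_n^{(k)} \subseteq A_{q^{(k)}}$. By the second Borel--Cantelli lemma applied to the iid events $\{X_i \in [q, q^{(k)}]\}$, infinitely many samples land in this box almost surely, so $m_n^{j,k}$ decreases in $n$ to a limit $m^{j,k} \in [q_j, q_j + 1/k]$. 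A diagonal choice $k = k_n \to \infty$ then drives $m_n^{j,k_n} \to q_j$ in each coordinate and, by monotone continuity of $\mu$, gives $\mu(A_q \triangle C_n^{(k_n)}) \to 0$.

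The hard part will be accommodating measures $\mu$ for which $\mu([q, q^{(k)}])$ vanishes for small $k$, so that no samples land just above $q$ and the construction above trivializes. In the $d = 1$ Glivenko--Cantelli case the standard remedy is to replace $q$ by $q^* := \sup\{y \geq q : \mu((q, y]) = 0\}$, whose existence ensures $A_q = A_{q^*}$ up to a $\mu$-null set and lets the upward approximation be carried out at a value where $\mu$ actually charges $(q^*, q^* + 1/k]$ for every $k$. The $d$-dimensional analogue requires an analogous coordinate-wise adjustment, perhaps combined with a symmetric downward approximation using unions of $A_{X_i}$ for $X_i \leq q$, and managing the interaction between coordinate directions is the main technical step.
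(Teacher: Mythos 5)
Your approach is genuinely different from the paper's: you approximate $A_q$ from the outside by intersecting sets $A_{X_i}$ for samples $X_i$ in a thin shell just above $q$, whereas the paper approximates from the inside via the box $A_r$ with $r_i = \max\{(X_j)_i : X_j \in A_q,\ j \leq N\}$, controlled by the multivariate Glivenko--Cantelli bound. You correctly flag an unresolved issue for $d > 1$, and you should trust that instinct: the gap is not fixable, because the theorem as stated is false for $d \geq 2$. Take $d = 2$ and let $\mu$ be uniform on the anti-diagonal segment $L \coloneqq \{(t, 1-t) : t \in [0,1]\}$. Each sample is $X_j = (T_j, 1 - T_j)$, and $(t,1-t) \in A_{X_j}$ forces $t \leq T_j$ and $1-t \leq 1-T_j$, i.e.\ $t = T_j$, so $A_{X_j} \cap L = \{X_j\}$. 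The restriction of $\bigvee_{n} \mc F_n$ to $L$ therefore lies in the $\sigma$-field generated by countably many singletons, and every one of its members has $\mu$-measure $0$ or $1$. But $A_{(1/2,1)}$ has $\mu$-measure $1/2$, so it cannot agree with any set in $\bigvee_{n} \mc F_n$ up to a $\mu$-null set, and no coordinate-wise $q^*$ adjustment or inside/outside combination can evade this.

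It is also worth noting that the paper's own proof has a parallel gap at exactly the same spot. The corner $r$ is a coordinate-wise maximum over several samples and need not equal any one $X_j$; consequently $A_r$ is generally not a Boolean combination of $A_{X_1}, \ldots, A_{X_N}$ and need not lie in $\mc F_N$ or even $\bigvee_{n} \mc F_n$. For example, with $X_1 = (1,3)$ and $X_2 = (3,1)$ one gets $A_r = (-\infty,3]^2$, which separates $(2,2)$ from $(5,5)$ even though those two points occupy the same atom of $\mc F_2$. The estimate $\mu(A_r \triangle A_q) < \varepsilon$ is correct, but Step 2 requires exhibiting an approximator inside $\bigvee_{n} \mc F_n$, which $A_r$ is not. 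For $d = 1$ both proofs close, since the one-dimensional max is attained at a single sample so $A_r = A_{X_{j^*}} \in \mc F_N$, and your $q^*$ remedy also works. For $d \geq 2$ the statement needs an extra nondegeneracy hypothesis on $\mu$ --- for instance a density bounded above and below on a cube as in Theorems~\ref{uniform rates} and \ref{uniform rates faster}, under which $E \coloneqq \bigcup\{A_{X_j} : X_j \in A_q\}$ satisfies $\mu(A_q \setminus E) = 0$ a.s.\ by a Fubini argument --- or one should switch to the randomized-radius balls of Theorem~\ref{metric G-C}, which are designed to handle exactly this kind of thin support.
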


This choice of $A_x$ is, of course, not the only choice that works. The proof works essentially the same with finite-sized boxes, balls, etc. To recover a different $\sigma$-field, one would use a different choice of $A_x$ sets; the choice of $A_x = (-\infty,x_1] \times \cdots \times (-\infty,x_d]$ necessarily implies that we are attempting to recover a sub-$\sigma$-field of the Borel $\sigma$-field because $\sigma(\{ A_x : x \in \R^d\}) = \mc B$.

\begin{lem} \label{inf-achieved}
Let $\mc G \subseteq \mc F$ be $\sigma$-fields, let $\mu$ be a probability measure defined on $\mc F$, and let $B \in \mc F$. Then the infimum
$$\inf_{A \in \mc G} \mu(A \triangle B)$$
is achieved.
\end{lem}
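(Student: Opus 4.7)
The plan is to construct a specific candidate minimizer via the conditional expectation $\E[\mathbf{1}_B \mid \mc G]$ and verify optimality by combining two ingredients: the Pythagorean identity for orthogonal projection onto $L^2(\mc G)$, and the pointwise fact that among $\{0,1\}$-valued numbers, rounding the conditional expectation at $1/2$ is the closest in squared distance.

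First I would define the candidate $A_* \coloneqq \{x \in S : \E[\mathbf{1}_B \mid \mc G](x) \geq 1/2\}$; this is in $\mc G$ since $\E[\mathbf{1}_B \mid \mc G]$ is $\mc G$-measurable. The next step is to observe the identity $\mu(A \triangle B) = \|\mathbf{1}_A - \mathbf{1}_B\|_{L^1(\mu)} = \|\mathbf{1}_A - \mathbf{1}_B\|_{L^2(\mu)}^2$, valid because $\mathbf{1}_A - \mathbf{1}_B$ takes values in $\{-1,0,1\}$ so that $|\mathbf{1}_A - \mathbf{1}_B| = (\mathbf{1}_A - \mathbf{1}_B)^2$. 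Applying the Pythagorean theorem, with $\E[\mathbf{1}_B \mid \mc G]$ acting as the orthogonal projection of $\mathbf{1}_B$ onto $L^2(\mc G,\mu)$ and $\mathbf{1}_A \in L^2(\mc G,\mu)$, gives
\begin{equation*}
\|\mathbf{1}_A - \mathbf{1}_B\|_{L^2(\mu)}^2 = \|\mathbf{1}_A - \E[\mathbf{1}_B \mid \mc G]\|_{L^2(\mu)}^2 + \|\E[\mathbf{1}_B \mid \mc G] - \mathbf{1}_B\|_{L^2(\mu)}^2.
\end{equation*}
Since the second term is independent of $A$, minimizing $\mu(A \triangle B)$ over $A \in \mc G$ reduces to minimizing $\|\mathbf{1}_A - \E[\mathbf{1}_B \mid \mc G]\|_{L^2(\mu)}^2$ over $\mc G$-measurable indicators $\mathbf{1}_A$.

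The central pointwise observation is that for any $t \in [0,1]$ and any $s \in \{0,1\}$, one has $(\mathbf{1}_{[1/2,1]}(t) - t)^2 \leq (s - t)^2$: if $t \geq 1/2$ then $\mathbf{1}_{[1/2,1]}(t)=1$ is at least as close to $t$ as $0$, and symmetrically when $t < 1/2$. Applying this with $t = \E[\mathbf{1}_B \mid \mc G](x)$ (which lies in $[0,1]$ $\mu$-a.e.) and $s = \mathbf{1}_A(x)$ yields the pointwise inequality
\begin{equation*}
(\mathbf{1}_{A_*}(x) - \E[\mathbf{1}_B \mid \mc G](x))^2 \leq (\mathbf{1}_A(x) - \E[\mathbf{1}_B \mid \mc G](x))^2
\end{equation*}
for $\mu$-a.e.\ $x$. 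Integrating and combining with the Pythagorean decomposition above gives $\mu(A_* \triangle B) \leq \mu(A \triangle B)$ for every $A \in \mc G$, so $A_*$ achieves the infimum.

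I do not anticipate any serious obstacle here: the only subtlety is remembering that the $L^1$--$L^2$ coincidence is specific to $\{0,1\}$-valued functions, which is precisely why minimizing over indicators (equivalently, over sets) is well-behaved despite the indicator set being non-convex in $L^2$. Everything else is a direct consequence of the $L^2$-projection property of conditional expectation.
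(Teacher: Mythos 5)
Your argument is correct and follows exactly the same route as the paper's proof: round the conditional expectation $\E[\mathbf{1}_B \mid \mc G]$ at $1/2$ to define $A_*$, use the $L^1$--$L^2$ coincidence for differences of indicators, apply the Pythagorean theorem for orthogonal projection, and conclude by the pointwise optimality of rounding. Your write-up is slightly more explicit about the pointwise inequality step, but the decomposition and the key lemma are the same.
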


\begin{proof}[Proof of Lemma~\ref{inf-achieved}]
To construct the minimizing set, we round $\E[\mbbm 1_B \mid \mc G]$ to get the ``closest indicator.'' Let $A_* = \{ x \in S : \E[\mbbm 1_B \mid \mc G] \geq 1/2 \}$ for some version of $ \E[\mbbm 1_B \mid \mc G]$ as a member of $L^2(\mu)$. We can directly show that $\mu(A_* \triangle B) \leq \mu(A \triangle B)$ for any $A \in \mc G$:
\begin{align*}
\mu(A_* \triangle B) &= \| \mbbm 1_{A_*} - \mbbm 1_B \|_{L^1(\mu)} \\
&= \| \mbbm 1_{A_*} - \mbbm 1_B \|_{L^2(\mu)}^2
\shortintertext{By the Pythagorean theorem,}
&= \norm{ \mbbm 1_{A_*} - \E\squa{\mbbm 1_B \mid \mc G}  }_{L^2(\mu)}^2 + \norm{ \E \squa{\mbbm 1_B \mid \mc G} - \mbbm 1_B }_{L^2(\mu)}^2
\shortintertext{By definition, for $\mu$-a.e.\ $x \in S$, $\mbbm 1_{A_*}(x)$ is closer to $\E[\mbbm 1_B \mid \mc G](x)$ than any other $\mc G$-measurable indicator is.}
&\leq \norm{ \mbbm 1_{A} - \E \squa{\mbbm 1_B \mid \mc G}  }_{L^2(\mu)}^2 + \norm{ \E \squa{\mbbm 1_B \mid \mc G} - \mbbm 1_B }_{L^2(\mu)}^2 \\
&= \| \mbbm 1_{A} - \mbbm 1_B \|_{L^2(\mu)}^2 \\
&= \| \mbbm 1_{A} - \mbbm 1_B \|_{L^1(\mu)} \\
&= \mu(A \triangle B).\qedhere
\end{align*}
\end{proof}

Taking the case where this infimum is zero gives the following topological interpretation of the above lemma.

\begin{cor}[$L^p(\mu)$ Closure of $\sigma$-fields] \label{sigma-fields-are-closed}
Let $\mc G \subseteq \mc F$ be $\sigma$-fields, let $\mu$ be a probability measure defined on $\mc F$, and let $B \in \mc F$. If there exists a sequence $B_n \in \mc G$ such that $\mu(B_n \triangle B) \to 0$ as $n \to \infty$, then $B \in \mc G$. In other words, $\{ \mbbm 1_A : A \in \mc G \}$ is a closed subset of $L^p(\mu)$ for all $1 \leq p < \infty$.
\end{cor}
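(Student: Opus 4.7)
The plan is to extract both halves of the corollary almost immediately from Lemma~\ref{inf-achieved} with essentially no extra work. I will treat the conclusion ``$B \in \mc G$'' in the sense of differing from an element of $\mc G$ by a $\mu$-null set, which matches the convention set up in the definition preceding Proposition~\ref{null-generating-fine} and which is exactly what makes the second sentence a faithful reformulation.

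For the first claim, I would invoke Lemma~\ref{inf-achieved} with the pair $\mc G \subseteq \mc F$ and the fixed set $B$, producing some $A_* \in \mc G$ achieving
$$\mu(A_* \triangle B) = \inf_{A \in \mc G} \mu(A \triangle B).$$
The hypothesized approximating sequence $B_n \in \mc G$ with $\mu(B_n \triangle B) \to 0$ forces this infimum to equal $0$, so $\mu(A_* \triangle B) = 0$. Thus $B$ agrees with the genuine element $A_* \in \mc G$ off a $\mu$-null set.

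For the $L^p$ closure statement, I would start with any sequence $B_n \in \mc G$ for which $\mbbm 1_{B_n} \to f$ in $L^p(\mu)$, and aim to exhibit $f$ as the indicator of a set in $\mc G$ (mod null sets). Passing to an a.e.-convergent subsequence shows that $f$ is $\{0,1\}$-valued $\mu$-a.e., so after modifying on a null set, $f = \mbbm 1_B$ for $B \coloneqq \{f = 1\} \in \mc F$. Because $|\mbbm 1_{B_n} - \mbbm 1_B|$ is itself $\{0,1\}$-valued, raising to the $p$-th power leaves it unchanged, giving $\|\mbbm 1_{B_n} - \mbbm 1_B\|_{L^p}^p = \mu(B_n \triangle B) \to 0$. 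The first half of the corollary then supplies $A_* \in \mc G$ with $\mu(A_* \triangle B) = 0$, so $\mbbm 1_{A_*} = f$ in $L^p(\mu)$, placing the limit in $\{\mbbm 1_A : A \in \mc G\}$.

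I do not expect any real obstacle: the substantive content is already packaged inside Lemma~\ref{inf-achieved}, and both halves of the corollary reduce to reading off that the minimum of $\mu(A \triangle B)$ over $A \in \mc G$ is attained. The only care needed is the bookkeeping between sets and their $\mu$-equivalence classes, which is what lets ``$B \in \mc G$'' serve as a shorthand for $L^p$ closure of the set of indicator functions.
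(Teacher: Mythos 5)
Your proposal is correct and matches the paper's intent: Corollary~\ref{sigma-fields-are-closed} is presented as an immediate consequence of Lemma~\ref{inf-achieved}, obtained by specializing to the case where the infimum is zero, exactly as you do. Your careful reading of ``$B \in \mc G$'' as holding modulo $\mu$-null sets is the right one (and necessary for the literal set-membership claim, which fails otherwise), and your extra measure-theoretic bookkeeping establishing the $L^p$ reformulation is sound.
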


\begin{proof}[Proof of Theorem~\ref{classical R^n G-C}]
The idea is to reduce the problem to showing that our empirical $\sigma$-fields can approximate any box. Then we use the Glivenko--Cantelli theorem to approximate any box from the inside; see Figure~\ref{Rn-proof-figure} for a picture.
\begin{itemize}

\item[] Step 1. (Reduce to recovering generating boxes): Since $\mc B$ is generated by the countable collection $\{A_q : q \in \Q^d\}$, Proposition~\ref{null-generating-fine} reduces the problem to showing that for each $q \in \Q^d$, with probability 1, there exists $A \in \bigvee_{n=1}^\infty \mc F_n$ such that $\mu(A \triangle A_q) = 0$.

\item[] Step 2. (Reduce to approximating non-null boxes): By Corollary~\ref{sigma-fields-are-closed}, it suffices to show that $\inf_{A \in \bigvee_{n=1}^\infty \mc F_n} \mu(A \triangle A_q) = 0$ almost surely. Fix $q \in \Q^d$ and $\varepsilon > 0$. We will exhibit a set $A \in \bigvee_{n=1}^\infty \mc F_n$ such that $\mu(A \triangle A_q) < \varepsilon$. Moreover, we may assume that $\mu(A_q) \neq 0$; otherwise, we can just pick $A = \varnothing$ and be done.

\item[] Step 3. (Approximate boxes from inside): Consider the empirical measure $\mu_N \coloneqq \frac{1}{n} \sum_{n=1}^N \delta_{X_n}$. From the Glivenko--Cantelli theorem, we can choose $N$ such that $\sup_{x \in \R^d} |\mu_N(A_x) - \mu(A_x)| < \varepsilon/2$. For non-null $A_q$, $\P(X_n \notin A_q \: \forall n) = 0$, so we may assume, increasing $N$ if necessary, that $A_q$ contains $X_n$ for some $n \leq N$.

Using this value of $N$, define $r = (r_1,\dots,r_d) \in \R^d$ by $r_i \coloneqq \max \{ (X_j)_i : X_j \in A_q,1 \leq j \leq N \}$. Then $\mu_N(A_r) = \mu_N(A_q)$, and $A_r \subseteq A_q$, so we can write
\begin{align*}
\mu(A_r \triangle A_q) &= \mu(A_q) - \mu(A_r) \\
&= \underbrace{\mu(A_q) - \mu_N(A_q)}_{< \varepsilon/2} + \underbrace{\mu_N(A_q) - \mu_N(A_r)}_{=0} + \underbrace{\mu_N(A_r) - \mu(A_r)}_{< \varepsilon/2} \\
&< \varepsilon.\qedhere
\end{align*}
\end{itemize}
\end{proof}

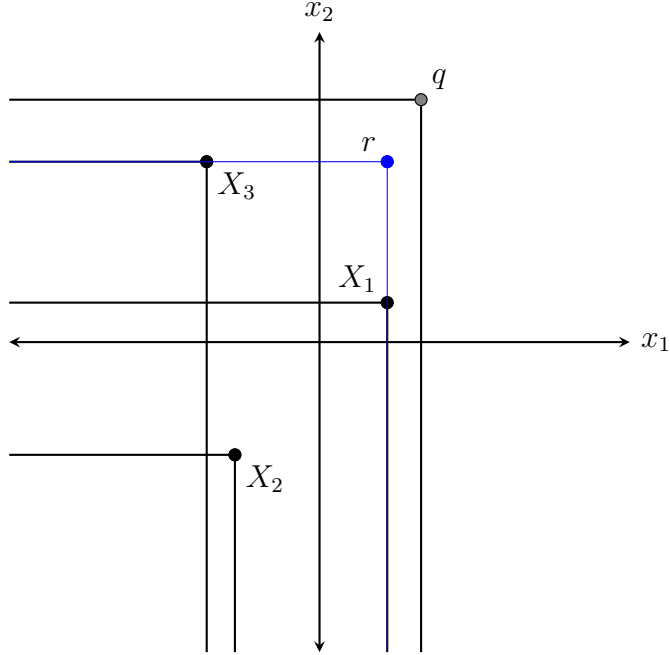
\begin{figure}[h]
\begin{center}
\begin{tikzpicture}[scale = 0.75]

  \draw[<->, thick, >=stealth] (-5.5,0) -- (5.5,0) node[right] {$x_1$};
  \draw[<->, thick, >=stealth] (0,-5.5) -- (0,5.5) node[above] {$x_2$};
  
  \draw[thick] (1.2,0.7) -- (-5.5,0.7);
  \draw[thick] (1.2,0.7) -- (1.2,-5.5);
  
  \filldraw[black](1.2,0.7) circle (3pt);
  \draw[black](1.2,0.7) circle (3pt);
  \node[above left] at (1.2,0.7) {$X_1$};

  \draw[thick] (1.8,4.3) -- (-5.5,4.3);
  \draw[thick] (1.8,4.3) -- (1.8,-5.5);
  
  \filldraw[gray](1.8,4.3) circle (3pt);
  \draw[black](1.8,4.3) circle (3pt);
  \node[above right] at (1.8,4.3) {$q$};

  \draw[thick] (-2,3.2) -- (-5.5,3.2);
  \draw[thick] (-2,3.2) -- (-2,-5.5);
  
  \filldraw[black](-2,3.2) circle (3pt);
  \draw[black](-2,3.2) circle (3pt);
  \node[below right] at (-2,3.2) {$X_3$};

   \draw[thick] (-1.5,-2) -- (-5.5,-2);
  \draw[thick] (-1.5,-2) -- (-1.5,-5.5);
  
  \filldraw[black](-1.5,-2) circle (3pt);
  \draw[black](-1.5,-2) circle (3pt);
  \node[below right] at (-1.5,-2) {$X_2$};
  
   \draw[color=blue] (1.2,3.2) -- (-5.5,3.2);
  \draw[color=blue] (1.2,3.2) -- (1.2,-5.5);
  
  \filldraw[blue](1.2,3.2) circle (3pt);
  \draw[blue](1.2,3.2) circle (3pt);
  \node[above left] at (1.2,3.2) {$r$};

\end{tikzpicture}
\end{center}
\caption{Approximating a box $A_q$ from inside in the proof of Theorem~\ref{classical R^n G-C}.}
\label{Rn-proof-figure}
\end{figure}

\subsection{Monotone convergence of resolution in metric spaces}

Before extending our viewpoint to the more general setting of metric spaces, we must first review some technical notions regarding regularity of measures. The following definition is from \cite{rigot2021differentiation}.

\begin{defn}
Let $\mu$ be a measure on a metric space $(S,\rho)$. We say that $\mu$ is of \emph{Vitali type with respect to $\rho$} if for every $A \subseteq S$ and every family $\mc C$ of balls in $(S,\rho)$ such that $\inf \{ r > 0 : B(x,r) \in \mc C \} = 0$ for all $x \in A$, there exists a countable subfamily $\mc D \subseteq \mc C$ of disjoint balls for which
$$\mu \paren{A \setminus \bigcup_{B \in \mc D} B} = 0.$$
\end{defn}

\cite{rigot2021differentiation} provides a number of examples with this property. Here are a few classes of examples.

\begin{ex}
Any Radon measure on $\R^d$ is of Vitali type with respect to the Euclidean metric.
\end{ex}

\begin{ex}
Every probability measure $\mu$ on $(S,\rho)$ which is \emph{doubling} is of Vitali type with respect to $\rho$. Here, $\mu$ is said to be doubling if there exists a constant $C \geq 1$ such that
$$\mu(B_{2r}(x)) \leq C \mu(B_r(x)) \qquad \forall x \in S, r > 0.$$
\end{ex}

The reason we care about the Vitali type property is that it describes the regularity of the density of a set $A$ with respect to the measure $\mu$. In particular, it tells us that the measure $\mu$ enjoys an analogue of the Lebesgue differentiation theorem.

\begin{lem}[\cite{rigot2021differentiation}] \label{leb-diff}
Let $\mu$ be a measure which is of Vitali type with respect to a metric space $(S,\rho)$. Then for every measurable set $A$,
$$\lim_{r \downarrow 0} \frac{\mu(A \cap B_r(x))}{\mu(B_r(x))} = \mbbm 1_A(x) \qquad \text{ for $\mu$-a.e.\ $x \in S$}.$$
\end{lem}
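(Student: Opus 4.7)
The plan is to adapt the classical Lebesgue differentiation proof on $\R^d$, with the Vitali-type hypothesis on $\mu$ replacing the Vitali/Besicovitch covering lemma. By a symmetry argument, everything reduces to a one-sided bound: using that $\mu(A \cap B_r(x))/\mu(B_r(x)) + \mu(A^c \cap B_r(x))/\mu(B_r(x)) = 1$, the conclusion follows once we show that for every measurable $A$ and every $\alpha > 0$ the ``bad set''
\[E_\alpha \coloneqq \left\{ x \in A^c : \limsup_{r \downarrow 0} \frac{\mu(A \cap B_r(x))}{\mu(B_r(x))} > \alpha\right\}\]
has $\mu$-measure zero. Applying this to $A$ and then to $A^c$, and taking a countable union over $\alpha \in \{1/n : n \geq 1\}$, gives the desired a.e.\ identification of the density with $\mathbbm{1}_A$.

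The next step is to replace $A$ by a closed approximant. Invoking inner regularity of $\mu$ (either inherited from the Radon-like setting implicit in Rigot's framework or stated as an added hypothesis), for each $\varepsilon > 0$ I would choose closed $K \subseteq A$ with $\mu(A \setminus K) < \varepsilon$. For any $x \in E_\alpha$, since $x \in A^c \subseteq K^c$ and $K^c$ is open, all sufficiently small balls $B_r(x)$ are disjoint from $K$, so $\mu(A \cap B_r(x)) = \mu((A \setminus K) \cap B_r(x))$. Consequently, at each $x \in E_\alpha$ there are still arbitrarily small radii $r$ with $\mu((A \setminus K) \cap B_r(x)) > \alpha \, \mu(B_r(x))$.

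This is exactly the configuration the Vitali-type property is built to exploit. Letting $\mathcal{C}$ be the family of these ``good'' balls centered at points of $E_\alpha$, we have $\inf\{r > 0 : B_r(x) \in \mathcal{C}\} = 0$ for every $x \in E_\alpha$, so there exists a countable disjoint subfamily $\{B_i\}_i \subseteq \mathcal{C}$ with $\mu\left(E_\alpha \setminus \bigcup_i B_i\right) = 0$. Then
\[\mu(E_\alpha) \;\leq\; \sum_i \mu(B_i) \;<\; \frac{1}{\alpha}\sum_i \mu((A \setminus K) \cap B_i) \;\leq\; \frac{\mu(A \setminus K)}{\alpha} \;<\; \frac{\varepsilon}{\alpha},\]
and letting $\varepsilon \downarrow 0$ yields $\mu(E_\alpha) = 0$.

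The step I would expect to be the real obstacle is the inner-regularity approximation: in the generality of a Vitali-type measure on an arbitrary metric space this is not automatic, and one has to either build it in from the start (as is natural in Rigot's Radon-like context) or substitute a direct $F_\sigma$-approximation via monotone class or Dynkin-system arguments. Once that piece is secured, the rest of the proof is just a clean Vitali-covering maximal-function bound, and the combinatorics are entirely formal.
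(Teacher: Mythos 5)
Note first that the paper does not prove this lemma at all: it is imported verbatim from the cited reference (Rigot's notes on differentiation of measures in metric spaces), so there is no in-paper argument to compare yours against. Your argument is the standard Vitali-covering proof of the density theorem, and it is essentially correct: the reduction to showing $\mu(E_\alpha)=0$ for $E_\alpha=\{x\in A^c:\limsup_{r\downarrow 0}\mu(A\cap B_r(x))/\mu(B_r(x))>\alpha\}$ (applied to $A$ and to $A^c$, then a countable union over $\alpha=1/n$) is sound, and the covering step uses the Vitali-type hypothesis exactly as intended; since that hypothesis is stated for \emph{arbitrary} subsets of $S$, it even absorbs the measurability question for $E_\alpha$ by working with outer measure. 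The only genuine technicalities left are (i) discarding the $\mu$-null set of points $x$ for which $\mu(B_r(x))=0$ for small $r$, and (ii) the inner-regularity step you flag.

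On (ii), the obstacle you anticipate is not actually an obstacle in the regime where the paper uses the lemma. Every \emph{finite} Borel measure on a metric space is automatically inner regular by closed sets and outer regular by open sets: the family of Borel sets $A$ such that for every $\varepsilon>0$ there are closed $K\subseteq A$ and open $U\supseteq A$ with $\mu(U\setminus K)<\varepsilon$ is a $\sigma$-algebra containing the closed sets (approximate a closed set $F$ from outside by the open sets $\{x:\rho(x,F)<1/n\}$). No compactness or Radon hypothesis is needed, and the paper invokes the lemma only for a probability measure on a separable metric space (Theorem~\ref{metric G-C}), so your closed approximant $K\subseteq A$ with $\mu(A\setminus K)<\varepsilon$ always exists there. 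If one insists on the lemma for a general (possibly infinite) Vitali-type measure, one should add local finiteness and localize, which is the setting of the cited reference; but for the paper's purposes your proof goes through as written.
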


When generalizing the ideas of the previous section to metric spaces, we lose the helpful ordering of $\R$. The natural candidate for a set $A_x$ in a general metric space is a ball $B_r(x)$ of radius $r$, centered at $x$. However, the following simple example shows that balls of a fixed radius may not always suffice.

\begin{ex}
Consider the metric space $[0,1]$  with the Euclidean metric and the measure $\mu(\{1/k\}) = 2^{-k}$ for $k = 1,2,\dots$. If we set $A_x = B_r(x)$ for any $r>0$, then we there are some points we cannot distinguish.
\end{ex}

However, we can still recover information resolution by sampling balls of varying radii. To make sure we can obtain a ball of any arbitrarily small radius, we introduce auxiliary randomness, which can be interpreted as a degree of noise determining the resolution given by the sample point $X_n$.

\begin{thm} \label{metric G-C}
Let $(S,\rho,\mc B,\mu)$ be a separable metric space equipped with the Borel $\sigma$-field and a probability measure $\mu$ which is of Vitali type with respect to $\rho$. Let $X_1,X_2,\dots \overset{\on{iid}}{\sim} \mu$ and $R_1,R_2,\dots \overset{\on{iid}}{\sim} \nu$ be independent, where $\nu$ is a distribution on $\R_{\geq 0}$ with $\nu((0,\varepsilon)) > 0$ for every $\varepsilon > 0$. For $x \in S$ and $r > 0$, let $A_{x,r} \coloneqq B_r(x) = \{ z \in S : \rho(z,x) < r\}$, and define the empirical $\sigma$-fields $\mc F_n \coloneqq \sigma(A_{X_1,R_1},\dots,A_{X_n,R_n})$. Then $\mc F_n \uparrow \mc B$ a.s.; that is, $\bigvee_{n=1}^\infty \mc F_n$ and $\mc B$ differ only by $\mu$-null sets.
\end{thm}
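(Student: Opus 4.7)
The strategy mirrors the proof of Theorem~\ref{classical R^n G-C}. By separability of $(S,\rho)$, fix a countable dense set $D \subseteq S$; then $\{B_q(y) : y \in D,\ q \in \Q_{>0}\}$ is a countable generating family for $\mc B$. Proposition~\ref{null-generating-fine} reduces the conclusion to $\mu$-approximating each such generating ball $B = B_q(y)$ by some element of $\bigvee_n \mc F_n$, and Corollary~\ref{sigma-fields-are-closed} reduces this further to showing $\inf_{A \in \bigvee_n \mc F_n} \mu(A \triangle B_q(y)) = 0$ almost surely. Intersecting the countably many resulting probability-one events will then close out the theorem.

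For a fixed generating ball, I will produce an explicit witness rather than an approximating sequence. Let
$$U(\omega) \coloneqq \bigcup_{k \geq 1 \,:\, B_{R_k(\omega)}(X_k(\omega)) \subseteq B_q(y)} B_{R_k(\omega)}(X_k(\omega)),$$
which is a countable union of members of $\mc F_k(\omega)$, hence lies in $\bigvee_n \mc F_n(\omega)$, and is contained in $B_q(y)$. It remains to prove $\mu(B_q(y) \setminus U) = 0$ almost surely. Fix $x \in B_q(y) \cap \on{supp}(\mu)$ and set $\eta \coloneqq (q - \rho(x,y))/2 > 0$. For a single sample, the event $\{\rho(X_k,x) < R_k < \eta\}$ simultaneously forces $x \in B_{R_k}(X_k)$ and, by the triangle inequality, $B_{R_k}(X_k) \subseteq B_{2\eta}(x) \subseteq B_q(y)$. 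Conditioning on $R_k$ shows this event has probability $\int_0^\eta \mu(B_\rho(x))\, d\nu(\rho)$, which is strictly positive because $x \in \on{supp}(\mu)$ forces $\mu(B_\rho(x)) > 0$ for every $\rho > 0$ and the hypothesis $\nu((0,\eta)) > 0$ ensures $\nu$ charges some subinterval of $(0,\eta)$. The second Borel--Cantelli lemma applied to the iid sample pairs then yields that almost surely infinitely many such $k$ occur, and so $x \in U(\omega)$ almost surely.

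The pointwise statement ``$\P(x \in U) = 1$ for each $x \in B_q(y) \cap \on{supp}(\mu)$'' is then upgraded to the set-level statement ``$\mu(B_q(y) \setminus U) = 0$ almost surely'' by Fubini applied to the jointly measurable set $\{(x,\omega) : x \in U(\omega)\} \subseteq S \times \Omega$, together with the fact that $\mu(S \setminus \on{supp}(\mu)) = 0$ in separable metric spaces. The main technical obstacle is this measurability bookkeeping---realizing $\{(x,\omega) : x \in B_{R_k(\omega)}(X_k(\omega))\}$ as the jointly Borel set $\{(x,\omega) : \rho(X_k(\omega),x) < R_k(\omega)\}$, and similarly for the containment condition defining $U$---after which Fubini is mechanical. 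Notably, this strategy does not invoke the Vitali type hypothesis or Lemma~\ref{leb-diff}; I expect those features instead in an approach that approximates general Borel $B$ via density-point Vitali covers, where the real difficulty would be sampling suitable covers from the iid pairs $(X_k,R_k)$ rather than building them from idealized balls centered at density points of $B$.
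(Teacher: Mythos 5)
Your proof is correct, and it takes a genuinely different route from the paper's. The paper also reduces to the countable family of balls $B_q(y)$, $y$ in a countable dense set and $q \in \Q_{>0}$, but then argues by contradiction: it invokes Lemma~\ref{inf-achieved} to get an optimal approximant $B_*$, applies the Lebesgue-differentiation statement (Lemma~\ref{leb-diff}, which is where the Vitali-type hypothesis enters) to find density points of $B_q(y) \setminus B_*$, and shows a freshly sampled small ball around such a point strictly improves the approximation. You instead exhibit an explicit witness $U$, the union of all sampled balls contained in $B_q(y)$, and show via Borel--Cantelli that every $x \in B_q(y) \cap \on{supp}(\mu)$ is a.s.\ covered, upgrading to $\mu(B_q(y)\setminus U)=0$ a.s.\ by Fubini; this works because the target is itself open, so every point of it has a whole neighborhood inside it, and no density argument is needed. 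Your flagged measurability concern is harmless: $B_r(x) \subseteq B_q(y)$ iff $\on{dist}(x, S \setminus B_q(y)) \geq r$, a closed condition in $(x,r)$, or you may simply define the index set by the sufficient condition $\rho(X_k,y)+R_k \leq q$, which the event $\{\rho(X_k,x)<R_k<\eta\}$ also forces. A notable payoff of your route is that it never uses the Vitali-type hypothesis, so it proves the conclusion for an arbitrary Borel probability measure on a separable metric space; the paper's density-point machinery is heavier but, as its remark indicates, adapts to sampling sets $A_{x,r}$ other than metric balls, where one must approximate optimal approximants that need not be open and the ``open target'' shortcut you exploit is unavailable.
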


\begin{rem}
The metric structure is not entirely essential in Theorem~\ref{metric G-C}. We mainly restrict this theorem to metric spaces to express the regularity of $\mu$ via the notion of set density with respect to $\mu$. This proof technique would work for any choice of sampling sets $A_{x,r}$ with appropriate regularity for $\mu$ as the sets $A_{x,r}$ more closely approximate $x$, e.g., a countable neighborhood base for a second countable topological space when $\mu$ is purely atomic. In fact, the $\sigma$-field need not be the Borel $\sigma$-field in general!
\end{rem}

\begin{proof}
Let $C$ be a countable dense subset of $S$. Balls of rational radius centered at points in $C$ generate $\mc B$, so it suffices to show that if $c \in C$ and $r \in \Q_{>0}$, $\bigvee_{n=1}^\infty \mc F_n$ contains $B_r(c)$ a.s. As in the proof of Theorem~\ref{classical R^n G-C}, it suffices for us to show that $\inf_{B' \in \bigvee_{n=1}^\infty \mc F_n} \mu(B_r(c) \triangle B') = 0$ a.s. We will show that the complement event has probability 0.

Suppose that $\inf_{B' \in \bigvee_{n=1}^\infty \mc F_n} \mu(B_r(c) \triangle B') = \delta > 0$. Then, by Lemma~\ref{inf-achieved}, there exists some $B_* \in \bigvee_{n=1}^\infty \mc F_n$ with $\mu(B_r(c) \triangle B_*) = \delta$. Without loss of generality, we may assume that $\mu(B_r(c) \setminus B_*) > 0$; the argument for $B_* \setminus B_r(c)$ is analogous. Lemma~\ref{leb-diff} provides a set $U \subseteq B_r(c) \setminus B_*$ of positive measure which only contains points of positive density with respect to $B_r(c)$:
$$\lim_{t \downarrow 0} \frac{\mu((B_r(c) \setminus B_*) \cap B_t(x))}{\mu(B_t(x))} = 1 \qquad \forall x \in U.$$
Hence, for each $x \in U$, there exists some radius $r_x$ such that for $t \leq r_x$,
$$\frac{\mu((B_r(c) \setminus B_*) \cap B_t(x))}{\mu(B_t(x))} > 1/2.$$
Rearranging gives
$$\mu((B_r(c) \setminus B_*) \cap B_t(x)) > \mu(B_t(x) \setminus (B_r(c) \setminus B_*)).$$

On the other hand, disintegrating over $U$ gives
\begin{align*}
\P(X_n \in U, R_n \leq r_{X_n}) &= \int_U \underbrace{\P(R_n \leq r_{x})}_{>0} d\mu(x) \\
&> 0,
\end{align*}
so that $\P(\exists n \text{ s.t.\ } X_n \in U, R_n \leq r_{X_n}) = 1$. This event is inconsistent with the fact that $\mu(B_r(c) \triangle B_*) = \delta$ because it implies that we could take $B_{**} \coloneqq B_* \cup B_{R_{X_n}}(X_n)$ for some $n$ and get the improved approximation
\begin{align*}
\mu(B_r(c) \triangle B_{**}) &\leq \underbrace{\mu(B_r(c) \triangle B_*)}_{=\delta} \\
&\quad + \underbrace{\mu( B_{R_{X_n}}(X_n)) \setminus (B_r(c) \setminus B_*)) - \mu((B_r(c) \setminus B_*) \cap B_{R_{X_n}}(X_n))}_{< 0} \\
&< \delta,
\end{align*}
contradicting the optimality of $B_*$. So $\P(\inf_{B' \in \bigvee_{n=1}^\infty \mc F_n} \mu(B_r(c) \triangle B') > 0) = 0$, as claimed.
\end{proof}

\section{Uniform convergence of resolution} \label{uniform-section}

If $\mc F_n \uparrow \mc F$, the martingale convergence theorem gives $\E[f \mid \mc F_n] \to \E[f \mid \mc F]$ a.s.\ and in $L^1$ for all bounded $f$. Hausdorff convergence can be viewed as a uniform version of this convergence:
$$d_\mu(\mc F_n, \mc F) \coloneqq \sup_{\| f\|_{L^\infty(\mu)} \leq 1} \| \E[ f \mid \mc F_n] - \E[f \mid \mc F] \|_{L^1(\mu)} \to 0.$$
However, uniform convergence over the entire unit ball in $L^\infty(\mu)$ is too strong of a condition for our purposes, as the following example shows.

\begin{ex}
Consider the probability space $([0,1],\mc B,\text{Leb})$, where $\mc B$ is the Borel $\sigma$-field and $\lambda$ is Lebesgue measure. Given any realization $\mc F_n \coloneqq \sigma([0,x_1],\dots,[0,x_n])$ of an empirical $\sigma$ field, we adversarially construct a function $f_n$ as follows: Let $0 < x_{(1)} < \cdots < x_{(n)} < 1$ list the sample points in increasing order, and take the convention that $x_{(0)} = 0$ and $x_{(n+1)} = 1$. Define
$$f_n(x) = \begin{cases}
1 &\text{if } x_{(k)} \leq x < \frac{x_{(k)} + x_{(k+1)}}{2} \text{ for some $0 \leq k \leq n$} \\
-1 &\text{if } \frac{x_{(k)} + x_{(k+1)}}{2} \leq x < x_{(k+1)} \text{ for some $0 \leq k \leq n$}.
\end{cases}$$
See Figure~\ref{adversarial-f} for an illustration.

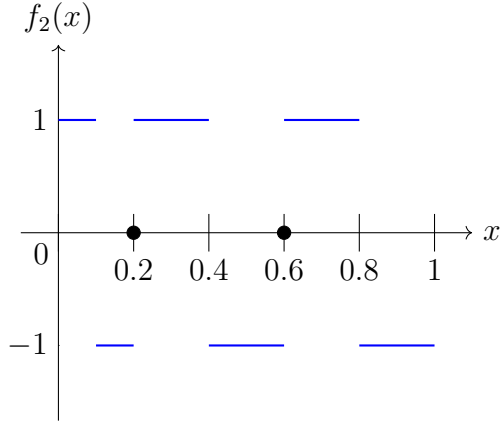
\begin{figure}[h]
\begin{center}
\begin{tikzpicture}[scale=5]
\draw[->] (-0.1,0) -- (1.1,0) node[right] {$x$};
\draw[->] (0,-0.5) -- (0,0.5) node[above] {$f_2(x)$};
\foreach \x in {0.2, 0.4,0.6,0.8,1} {
  \draw (\x,-0.05) -- (\x,0.05) node[below=13pt] {\x};
}
\draw (0,-0.3) -- (0,-0.3) node[left=0pt] {$-1$};
\draw (0,0.3) -- (0,0.3) node[left=0pt] {$1$};
\draw (0,-0.05) -- (0,0.05) node[below=7pt] {0 \: \:};
\draw[blue, thick] (0,0.3) -- (0.1,0.3);
\draw[blue, thick] (0.1,-0.3) -- (0.2,-0.3);
\draw[blue, thick] (0.2,0.3) -- (0.4,0.3);
\draw[blue, thick] (0.4,-0.3) -- (0.6,-0.3);
\draw[blue, thick] (0.6,0.3) -- (0.8,0.3);
\draw[blue, thick] (0.8,-0.3) -- (1,- 0.3);
\filldraw ( 0.2,0) circle (0.5pt) node[below] {};
\filldraw ( 0.6,0) circle (0.5pt) node[below] {};
\end{tikzpicture}
\end{center}
\caption{An adversarially chosen function which maximizes the Hausdorff distance.}
\label{adversarial-f}
\end{figure}
Then on each $A \in \mc F_n$, $\E[f_n \mid A] = 0$, so $\E[f_n \mid \mc F_n] = 0$ $\lambda$-a.s. Thus,
$$d_{\lambda}(\mc F_n, \mc B) \geq \| \E[f_n \mid \mc F_n] - \underbrace{\E[f_n \mid \mc B]}_{= f_n} \|_{L^1(\lambda)}= 1.$$
So we cannot hope for uniform convergence over such a large class of functions.
\end{ex}

Instead of uniform convergence over all $f$ with $\| f \|_{L^\infty(\mu)} \leq 1$, we consider uniform convergence over 1-Lipschitz $f$. We again use the coordinate-wise dominated boxes $A_x \coloneqq [0,x_1] \times \cdots \times [0,x_d]$, but this choice is arbitrary, and one can prove uniform convergence with other choices for $A_x$ (perhaps with different rates).

Due to the asymmetrical nature of this partition, the sets containing points with coordinates near $1$ will be larger the the sets containing coordinates near 0, leading to a slow rate of convergence of $O((\log n/n)^{1/d})$. After stating this slow rate, we will see that a symmetrizing adjustment to this partition leads to a much faster rate of $O(1/n)$.

\begin{restatable}{thm}{uniformrates} \label{uniform rates}
Let $([0,1]^d,\mc B,\mu)$ be a probability space equipped with the Borel $\sigma$-field, and let $X_1,X_2,\dots \overset{\on{iid}}{\sim} \mu$, where $\mu \ll \lambda$ and $\gamma^{-1} < \frac{d\mu}{d\lambda} < \gamma$ for some $\gamma \geq 1$. For $x = (x_1,\dots,x_d) \in [0,1]^d$, let $A_x \coloneqq [0,x_1] \times \cdots \times [0,x_d]$, and define the empirical $\sigma$-fields $\mc F_n \coloneqq \sigma(A_{X_1},\dots,A_{X_n})$. Then
$$\sup_{\| f \|_{\on{Lip}}\leq 1} \| \E[f \mid \mc F_n] - f \|_{L^1(\mu)} \xrightarrow{\P\text{-a.s.},L^1(\P)} 0,$$
where $\| f \|_{\on{Lip}}: = \sup \{ \frac{|f(x) - f(y)|}{|x-y|} : x \neq y\}$ is the Lipschitz norm. Moreover,
$$\E \squa{\sup_{\| f \|_{\on{Lip}}\leq 1} \| \E[f \mid \mc F_n] - f \|_{L^1(\mu)}} \ls \paren{\frac{\log n}{n}}^{1/d} \qquad \forall n \geq 3.$$
The constant factor in the bound depends only on $d$ and $\gamma$.
\end{restatable}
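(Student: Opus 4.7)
The plan is to reduce the Lipschitz-indexed supremum to a pointwise atom-diameter bound, then to control atom diameters via a grid argument at the scale $\epsilon \asymp (\log n/n)^{1/d}$. The key observation is that for any $1$-Lipschitz $f$, $\E[f \mid \mc F_n]$ is the $\mu$-average of $f$ on each atom, so $|\E[f \mid \mc F_n](z) - f(z)|$ is pointwise bounded by $\on{diam}(\on{atom}(z))$. Hence
$$\E\squa{\sup_{\|f\|_{\on{Lip}} \leq 1} \|\E[f \mid \mc F_n] - f\|_{L^1(\mu)}} \leq \int_{[0,1]^d} \E[\on{diam}(\on{atom}(z))]\, d\mu(z),$$
and it suffices to bound the right-hand side by $O((\log n / n)^{1/d})$.

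For the grid argument, I fix $\epsilon = C(\log n/n)^{1/d}$ with $C = C(d,\gamma)$ large, and partition $[0,1]^d$ into $\epsilon^{-d}$ axis-aligned cubes of side $\epsilon$. The density bound $\gamma^{-1} \leq d\mu/d\lambda \leq \gamma$ guarantees each cube has $\mu$-mass at least $\epsilon^d/\gamma$, so a union bound yields $\P(G^c) \leq n^{-2}$, where $G$ is the event that every grid cube contains at least one sample. Conditional on $G$, I claim $\on{diam}(\on{atom}(z)) \leq K\epsilon$ uniformly in $z$ (away from $\partial[0,1]^d$) for some $K = K(d)$. The upward direction is short: if $y \in \on{atom}(z)$ and $y_k > z_k + K\epsilon$ for some $k$, then the sample in the grid cube obtained by shifting $z$'s cube by $(+\epsilon,\ldots,+\epsilon)$ dominates $z$ coordinatewise but lies strictly below $y$ in coordinate $k$, separating $y$ from $z$ and contradicting their lying in a common atom. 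Applying this in every coordinate forces $y_\ell \leq z_\ell + K\epsilon$ for all $\ell$. The downward case $y_k < z_k - K\epsilon$ is then closed by picking a sample in the cube shifted by $-\epsilon$ in coordinate $k$ and by $(K+1)\epsilon$ in the remaining coordinates: it loses strictly to $z$ in coordinate $k$, but dominates $y$ in every coordinate thanks to the upper bounds just established.

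The main obstacle is this mixed-sign downward case: the separating sample must dominate $y$ in the $d-1$ non-$k$ coordinates, which becomes automatic only after bootstrapping on the upward case, and requires the shifted neighbor cube to lie inside $[0,1]^d$. For $z$ with $z_\ell > 1 - O(\epsilon)$ in some coordinate (a boundary region of $\mu$-mass $O(d\gamma\epsilon)$), this cube may fail to exist, so I fall back to the trivial bound $\on{diam}(\on{atom}(z)) \leq \sqrt{d}$ there, contributing only $O(\epsilon)$ to the integral. Combining the uniform interior bound on $G$ with the trivial bound on $G^c$ gives
$$\E\squa{\int \on{diam}(\on{atom}(z))\, d\mu(z)} \ls K\epsilon + \sqrt{d}\cdot n^{-2} \ls \paren{\frac{\log n}{n}}^{1/d},$$
which is the stated quantitative bound. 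The $L^1(\P)$ convergence then follows at once, and the $\P$-a.s.\ convergence from Borel--Cantelli applied to the summable probabilities $\P(G^c) \leq n^{-2}$.
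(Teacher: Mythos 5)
Your proof is correct and follows essentially the same route as the paper's: reduce the Lipschitz supremum to the $\mu$-weighted atom-diameter bound, occupy an $\varepsilon$-grid with $\varepsilon \asymp (\log n/n)^{1/d}$ via a union bound, deduce that interior atoms have diameter $O(\varepsilon)$, and give the near-boundary region the trivial diameter bound times its $O(\varepsilon)$ mass. The differences are only in execution—a single bad event of probability $n^{-2}$ handled by the trivial bound and Borel--Cantelli, in place of the paper's integration of the tail over $\delta$ and its monotonicity argument for a.s.\ convergence—and your coordinatewise separation argument covering the whole upper boundary strip is, if anything, a slightly more careful justification of the diameter bound than the paper's attribution of all large atoms to the single corner cell.
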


The proof is in Appendix~\ref{uniform_appendix}.

To improve the convergence, we use the more symmetric partition $\wt {\mc F_n} \coloneqq \sigma( \{ x : x_i \leq X_{j,i}  \} : 1 \leq i \leq d, 1 \leq j \leq n )$, which splits the unit cube in two pieces along every coordinate of each sample point $X_j$. See Figure~\ref{symmetrized_partition} for an illustration.

\begin{figure}[h]
\begin{center}
\begin{tikzpicture}

  \draw[thick] (0,0) rectangle (5,5);

  \coordinate (P1) at (1.2, 3.8); 
  \coordinate (P2) at (3.5, 1.6); 
  \coordinate (P3) at (2.8, 4.2); 

  \draw[ blue] (1.2, 0) -- (1.2, 5); 
  \draw[ blue] (0, 3.8) -- (5, 3.8); 

  \draw[ red] (3.5, 0) -- (3.5, 5); 
  \draw[ red] (0, 1.6) -- (5, 1.6); 

  \draw[ gray] (2.8, 0) -- (2.8, 5); 
  \draw[ gray] (0, 4.2) -- (5, 4.2); 
  
  \filldraw[black] (P1) circle (2pt);
  \filldraw[black] (P2) circle (2pt);
  \filldraw[black] (P3) circle (2pt);

\end{tikzpicture}
\end{center}
\caption{The points in $\wt{\mc F_n}$ splitting the unit cube in every coordinate.}
\label{symmetrized_partition}
\end{figure}
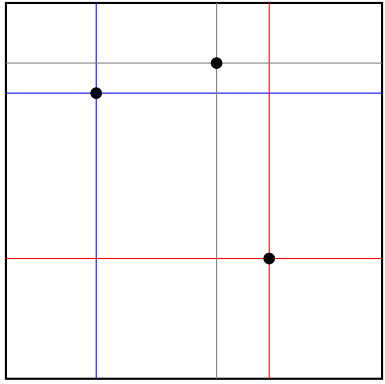

Now, we get a much faster rate:

\begin{restatable}[Faster uniform convergence with symmetrized $A_x$]{thm}{uniformratesfaster} \label{uniform rates faster}
Let $([0,1]^d,\mc B,\mu)$ be a probability space equipped with the Borel $\sigma$-field, and let $X_1,X_2,\dots \overset{\on{iid}}{\sim} \mu$, where $\mu \ll \lambda$ and $\gamma^{-1} < \frac{d\mu}{d\lambda} < \gamma$ for some $\gamma \geq 1$. Define the empirical $\sigma$-fields $\wt {\mc F_n} \coloneqq \sigma( \{ x : x_i \leq X_{j,i}  \} : 1 \leq i \leq d, 1 \leq j \leq n )$. Then
$$\sup_{\| f \|_{\on{Lip}}\leq 1} \| \E[f \mid \wt{\mc F_n}] - f \|_{L^1(\mu)} \xrightarrow{\P\text{-a.s.},L^1(\P)} 0,$$
where $\| f \|_{\on{Lip}}: = \sup \{ \frac{|f(x) - f(y)|}{|x-y|} : x \neq y\}$ is the Lipschitz norm. Moreover,
$$\E \squa{\sup_{\| f \|_{\on{Lip}}\leq 1} \| \E[f \mid \wt{\mc F_n}] - f \|_{L^1(\mu)}} \ls \frac{\sqrt d}{n} \qquad \forall n \geq 1.$$
The constant factor in the bound depends only on $\gamma$.
\end{restatable}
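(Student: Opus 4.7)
The plan is to exploit the product structure of $\wt{\mc F_n}$: along each coordinate $i$, the values $X_{1,i},\dots,X_{n,i}$ split $[0,1]$ into $n+1$ intervals with random spacings $w_{i,1},\dots,w_{i,n+1}$, so the atoms of $\wt{\mc F_n}$ are the $(n+1)^d$ axis-aligned boxes with side lengths $(w_{1,k_1},\dots,w_{d,k_d})$. A naive $|f(x)-\E[f\mid A]|\le \on{diam}(A)$ argument yields only an $O(d/n)$ rate, so to get $\sqrt{d}/n$ I would first pass from $L^1(\mu)$ to $L^2(\mu)$ via Cauchy--Schwarz, then bound each atom by a Poincaré-type estimate:
$$\int_A |f-\E[f\mid A]|^2\,d\mu \;\le\; \frac{1}{\mu(A)}\int_A\int_A \|x-y\|_2^2\,d\mu(x)\,d\mu(y) \;\le\; \mu(A)\sum_i w_{i,k_i}^2$$
for any $1$-Lipschitz $f$, the last step using the per-coordinate diameter.

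Summing over atoms and using $\mu(A)\le \gamma\prod_j w_{j,k_j}$, the product structure of the grid collapses the inner sum cleanly: for each coordinate $i$, $\sum_A \prod_j w_{j,k_j}\cdot w_{i,k_i}^2 = \sum_{k_i} w_{i,k_i}^3$, since the other coordinate gap sums all equal $1$. This gives $\sup_{\|f\|_{\on{Lip}}\le 1}\|\E[f\mid\wt{\mc F_n}]-f\|_{L^1(\mu)} \le \sqrt{\gamma \sum_i \sum_k w_{i,k}^3}$. Applying Jensen's inequality to pull the expectation inside the square root, then using coordinate symmetry, reduces matters to bounding the one-dimensional moment $\E[\sum_k w_{1,k}^3]$. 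Since the coordinate-$1$ marginal density lies in $[\gamma^{-1},\gamma]$, the quantile transform gives $w_{1,k}\le \gamma V_k$ for uniform spacings $V_k$, and the classical identity $\E[V_k^3]=\frac{6}{(n+1)(n+2)(n+3)}$ yields $\E[\sum_k V_k^3]=O(1/n^2)$. Combining the pieces gives the desired $\sqrt{d}/n$ bound with constant polynomial in $\gamma$.

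Convergence in $L^1(\P)$ is immediate from this quantitative bound, since the random variable is nonnegative with vanishing expectation. For almost sure convergence I would first apply martingale convergence (noting $\wt{\mc F_n}\supseteq \mc F_n$, so Theorem~\ref{classical R^n G-C} gives $\bigvee_n\wt{\mc F_n}=\mc B$ up to $\mu$-null sets) to obtain $\|\E[f\mid\wt{\mc F_n}]-f\|_{L^1(\mu)}\to 0$ a.s.\ for each fixed Lipschitz $f$, and then upgrade to uniform convergence via Arzelà--Ascoli compactness of the class of $1$-Lipschitz functions on $[0,1]^d$ modulo additive constants (which do not affect the $L^1$ error): for each $\varepsilon=1/k$, choose a finite $\varepsilon$-net in sup norm, combine pointwise a.s.\ convergence on the net with a triangle-inequality approximation step, and intersect the resulting full-measure events over $k$. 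The main obstacle is obtaining the sharp $\sqrt{d}/n$ rate rather than the easier $d/n$ or $\sqrt{d/n}$ bounds; this hinges on the order above, so that the grid's product structure yields the cube $w_{i,k}^3$ (with $O(1/n^2)$ expected sum) instead of the square $w_{i,k}^2$ (with only $O(1/n)$ expected sum)—taking expectation before the Jensen step or bounding diameters before passing to $L^2$ both sacrifice the gain.
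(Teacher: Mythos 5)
Your proposal is correct and arrives at the $\sqrt d/n$ rate via essentially the same computation as the paper, with a few stylistic divergences worth noting. Both proofs reduce the $L^1(\mu)$ error to an $L^2$-type quantity and use the moment $\E[V^3] = \Theta(1/n^3)$ of uniform spacings as the engine. Where you differ: (a) you bound each atom's variance term by the Poincar\'e estimate $\mu(A)\sum_i w_{i,k_i}^2$ and then exploit the grid's product structure to telescope $\sum_A \mu(A)\sum_i w_{i,k_i}^2$ into $\gamma\sum_i\sum_k w_{i,k}^3$; the paper instead inserts the upper corner $u^A$ via the triangle inequality $\|y-x\|\leq\|y-u^A\|+\|u^A-x\|$, then observes $\|y-u^{A_y}\|_2^2 = \sum_i (y_i-u_i^{A_y})^2$ reduces by linearity to $d$ copies of a one-dimensional integral. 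These are two phrasings of the same dimensional decomposition, and both correctly place Cauchy--Schwarz/Jensen after the per-coordinate cubing so as to capture the $1/n^2$ expected third moment rather than the $1/n$ expected second moment. (b) You make the quantile transform explicit to pass from the marginal density in $[\gamma^{-1},\gamma]$ to uniform spacings; the paper's proof asserts the Beta$(1,n)$ distribution directly, which is slightly imprecise for non-uniform marginals, so your treatment is actually cleaner here. (c) For $\P$-a.s.\ convergence you propose martingale convergence per fixed $f$ plus an Arzel\`a--Ascoli $\varepsilon$-net over the $1$-Lipschitz class; this works, but the paper's route is simpler: since $\wt{\mc F_n}$ refines $\mc F_n$ and refines itself as $n$ grows, the pathwise bound $H_n := \sum_{A\in\mc P_n}\mu(A)\diam(A)$ is nonincreasing in $n$, so $\E[H_n]\to 0$ together with monotonicity and Fatou already gives $H_n\to 0$ a.s.\ without any compactness argument. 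Both routes are valid, so the proposal is sound; the monotonicity trick is worth internalizing as it sidesteps the net entirely.
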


The proof is in Appendix~\ref{uniform_appendix}.

\begin{rem}
By scaling the sides of the box by constants, the results in Theorem~\ref{uniform rates} and Theorem~\ref{uniform rates faster} apply to boxes in $\R^d$ which are not $[0,1]^d$. We incur only an extra multiplicative factor of the volume of the box in our bound. Similarly, if we allow $f$ to be $L$-Lipschitz, we incur only a factor of $L$.
\end{rem}

\begin{rem}
The bound in Theorem~\ref{uniform rates faster} is tight. For a lower bound, consider the example $f(x) = x_1$ and $\mu = \lambda$. The partition is an axis-aligned grid, so the conditional expectation of $f$ in any set in the box is just the average of the maximal and minimal $x_1$ values for that box. So the integral is independent of the latter $d-1$ coordinates, and the problem reduces to a 1-dimensional problem.

Denoting the 1st coordinate of each $X_j$ as $X_{1,1},X_{2,1},\dots,X_{n,1}$ and denoting the order statistics of these values as $0 = Y_0 < Y_1 < \cdots < Y_n < Y_{n+1} = 1$, we write
\begin{align*}
\| \E[f \mid \wt{\mc F_n}] - f \|_{L^1(\mu)} &= \sum_{k=0}^n \int_{Y_k}^{Y_{k+1}} \abs{\frac{Y_k + Y_{k+1}}{2} - x_1} \, dx_1 \\
&= \sum_{k=0}^n \frac{(Y_{k+1} - Y_k)^2}{4},
\end{align*}
Taking expectations, we get
\begin{align*}
\E[\| \E[f \mid \mc F_n] - f \|_{L^1(\mu)} ]  &\geq \E[\| \E[f \mid \wt{\mc F_n}] - f \|_{L^1(\mu)} ] \\
&= \frac{1}{4} \sum_{k=0}^n \E[(Y_{k+1} - Y_k)^2],
\shortintertext{Where the $Y_k$ are the order statistics of $n$ iid uniform random variables on $[0,1]$. The differences of these successive order statistics are Beta($1$,$n$) distributed, so this equals}
&= \frac{1}{4} (n+1) \frac{2}{(n+1)(n+2)} \\
&= \frac{1}{2(n+2)}.
\end{align*}
\end{rem}

The $\sigma$-field $\wt{\mc F_n}$ is a refinement of $\mc F_n$, so the lower bound of $1/n$ applies to $\mc F_n$, as well.

\section{Applications} \label{app-section}

\subsection{Randomized Skorokhod embeddings}

Skorokhod (\cite{MR0185619}, translated into English \cite{MR0185620}) posed and solved the problem of embedding distributions of real-valued random variables into Brownian motion by stopping the process at suitably constructed random times. Since then, many solutions to the Skorokhod embedding problem have been discovered, with varying properties of interest; see \cite{obloj2004} for a survey detailing the various constructions and their historical context and \cite{beiglbock2017optimal} for a more recent work unifying many solutions to the problem.

Of particular note for our purposes is Dubins' 1968 solution to the Skorokhod embedding problem \cite{dubins1968theorem}. By adjusting Dubins' solution, we will provide a method of \emph{randomly generating} Skorokhod embeddings for a given distribution $\mu$.

Dubins' construction proceeds via a \emph{binary splitting martingale}. Suppose $X \sim \mu$ (with $\E[X] = 0$) and we want to generate the distribution of $X$ via a stopping time $T$ for Brownian motion (meaning $B_T \overset{\on{d}}{=} X$). We first create barriers for the Brownian motion at the points $x_1 \coloneqq \mathbb E[X \mid X < 0]$ and $x_2 \coloneqq \E[X \mid X > 0]$ and let $T_1 \coloneqq \inf \{ t > 0 : B_t \in  \{x_1,x_2 \} \}$. This divides the line into four intervals.

\begin{figure}[h]
\centering
\begin{tikzpicture}[scale=1.1]

  \draw[<->] (-5,2) -- (5,2);
  
  \draw[thick] (0,2.2) -- (0,1.8);
  \node[below] at (0,1.8) {$0$};

  \draw[thick, red] (-2.5,2.2) -- (-2.5,1.8);
  \node[below] at (-2.5,1.8) {$x_1$};

  \draw[thick, red] (2.5,2.2) -- (2.5,1.8);
  \node[below] at (2.5,1.8) {$x_2$};

  \draw[decorate,decoration={brace,mirror},yshift=-0.8cm]
    (-5,2) -- (-2.55,2) node[midway,below=5pt] {\footnotesize $X < x_1$};
    
  \draw[decorate,decoration={brace,mirror},yshift=-0.8cm]
    (-2.45,2) -- (-0.05,2) node[midway,below=5pt] {\footnotesize $x_1 < X < 0$};
    
  \draw[decorate,decoration={brace,mirror},yshift=-0.8cm]
    (0.05,2) -- (2.45,2) node[midway,below=5pt] {\footnotesize $0 < X < x_2$};
    
  \draw[decorate,decoration={brace,mirror},yshift=-0.8cm]
    (2.55,2) -- (5,2) node[midway,below=5pt] {\footnotesize $X > x_2$};

  \draw[<->] (-5,0) -- (5,0);

  \draw[thick] (0,0.2) -- (0,-0.2);
  \node[below] at (0,-0.2) {$0$};

  \draw[thick, red] (-2.5,0.2) -- (-2.5,-0.2);
  \node[below] at (-2.5,-0.2) {$x_1$};

  \draw[thick, red] (2.5,0.2) -- (2.5,-0.2);
  \node[below] at (2.5,-0.2) {$x_2$};

  \draw[thick, blue] (-4,0.125) -- (-4,-0.125);
  \node[below] at (-4,-0.2) {$y_1$};

  \draw[thick, blue] (-1.5,0.125) -- (-1.5,-0.125);
  \node[below] at (-1.5,-0.2) {$y_2$};

  \draw[thick, blue] (1.5,0.125) -- (1.5,-0.125);
  \node[below] at (1.5,-0.2) {$y_3$};

  \draw[thick, blue] (4,0.125) -- (4,-0.125);
  \node[below] at (4,-0.2) {$y_4$};

  \draw[decorate,decoration={brace,mirror},yshift=-0.8cm]
    (-5,0) -- (-2.55,0) node[midway,below=6pt] {\footnotesize $X < x_1$};
    
  \draw[decorate,decoration={brace,mirror},yshift=-0.8cm]
    (-2.45,0) -- (-0.05,0) node[midway,below=6pt] {\footnotesize $x_1 < X < 0$};
    
  \draw[decorate,decoration={brace,mirror},yshift=-0.8cm]
    (0.05,0) -- (2.45,0) node[midway,below=6pt] {\footnotesize $0 < X < x_2$};
    
  \draw[decorate,decoration={brace,mirror},yshift=-0.8cm]
    (2.55,0) -- (5,0) node[midway,below=6pt] {\footnotesize $X > x_2$};

\end{tikzpicture}
\caption{Top: first step of Dubins' binary splitting, with barriers $x_1$ and $x_2$. Bottom: refinement using $y_1,\dots,y_4$.}
\label{dubins-interval}
\end{figure}
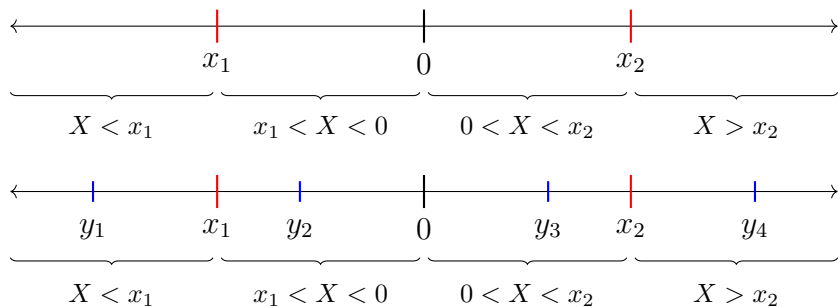

For the next step, we divide each of the intervals in two by adding more barriers. In particular, we add barriers
$$y_1 \coloneqq \E[X \mid X \leq x_1], \qquad y_2 \coloneqq \E[X \mid x_1 < X < 0],$$
$$y_3 \coloneqq \E[X \mid 0 < X \leq x_2], \qquad y_4 \coloneqq \E[X \mid X > x_2]$$
and let $T_2 \coloneqq \inf \{ t > T_1 : B_t \in  \{y_1,y_2, y_3,y_4 \} \}$. See Figure~\ref{dubins-interval} for an illustration.

Repeating this process, we end up with a sequence of stopping times $(T_n)_{n=1}^\infty$ for Brownian motion such that $B_{T_n}$ equals, with equal probability, any of the $2^n$ level $n$ barrier points. In fact, a more careful analysis of this process shows that $B_{T_n} \overset{d} {=} \E[X \mid \mc B_n]$, where $\mc B_n$ is the $\sigma$-field representing the partition of the interval by all barrier points up to level $n$. Taking $T \coloneqq \lim_{n \to \infty} T_n$ gives us the stopping time we desire. Figure~\ref{dubins-brownian} illustrates the first few steps of this process on a simulated Brownian motion.

\begin{figure}[h]
\begin{center}
\begin{tikzpicture}[ >=Stealth,scale=1.25]

  \draw[<->] (0,-3) -- (0,3);
  \draw[->] (0,0) -- (7,0);

  \draw[dashed, thick, red] (0,-1) -- (2.1,-1);
  \node[left] at (0,-1) {$x_1$};

  \draw[dashed, thick, red] (0,1) -- (2.1,1);
  \node[left] at (0,1) {$x_2$};

  \draw[dashed, thick, red] (2.1,-2) -- (4.75,-2);
  \node[left] at (0,-2) {$y_1$};

  \draw[dashed, thick, red] (2.1,-0.5) -- (4.75,-0.5);
  \node[left] at (0,-0.5) {$y_2$};

  \draw[dashed, thick, red] (2.1,0.5) -- (4.75,0.5);
  \node[left] at (0,0.5) {$y_3$};

  \draw[dashed, thick, red] (2.1,2) -- (4.75,2);
  \node[left] at (0,2) {$y_4$};

  \draw[dashed, thick, red] (4.75,2.5) -- (5.12,2.5);

  \draw[dashed, thick, red] (4.75,1.7) -- (5.12,1.7);

  \draw[dashed, thick, red] (4.75,0.85) -- (5.12,0.85);
  
  \draw[dashed, thick, red] (4.75,0.25) -- (5.12,0.25);
  
  \draw[dashed, thick, red] (4.75,-0.15) -- (5.12,-0.15);

  \draw[dashed, thick, red] (4.75,-0.75) -- (5.12,-0.75);

  \draw[dashed, thick, red] (4.75,-1.3) -- (5.12,-1.3);
  
  \draw[dashed, thick, red] (4.75,-2.3) -- (5.12,-2.3);

  \draw[dashed, thick] (2.1,-3) -- (2.1,3); 
  \node[above] at (2.1,3) {$T_1$};

  \draw[dashed, thick] (4.75,-3) -- (4.75,3); 
  \node[above] at (4.75,3) {$T_2$};
  
  \draw[dashed, thick] (5.12,-3) -- (5.12,3); 
  \node[above] at (5.12,3) {$T_3$};

  \def\nsteps{200} 
  \def\xmax{7}
  \pgfmathsetmacro{\dx}{\xmax/\nsteps}
  \def\scaleY{0.2} 
  \def\std{0.01} 
  
  \pgfmathsetseed{777} 

  \coordinate (p0) at (0,0);
  \pgfmathsetmacro{\lasty}{0}

  \foreach \i in {1,...,\nsteps} {
    \pgfmathsetmacro{\dy}{50*randnormal*\std}
    \pgfmathsetmacro{\thisy}{\lasty+\dy}
    \pgfmathsetmacro{\thisx}{\i*\dx}
    \coordinate (p\i) at (\thisx,{\thisy*\scaleY}); 
    \global\let\lasty\thisy
  }

  \draw[blue, thick]
    (p0)
    \foreach \i in {1,...,\nsteps} {
      -- (p\i)
    };

\end{tikzpicture}
\end{center}
\caption{The first 3 steps of stopping times in Dubins' construction.}
\label{dubins-brownian}
\end{figure}
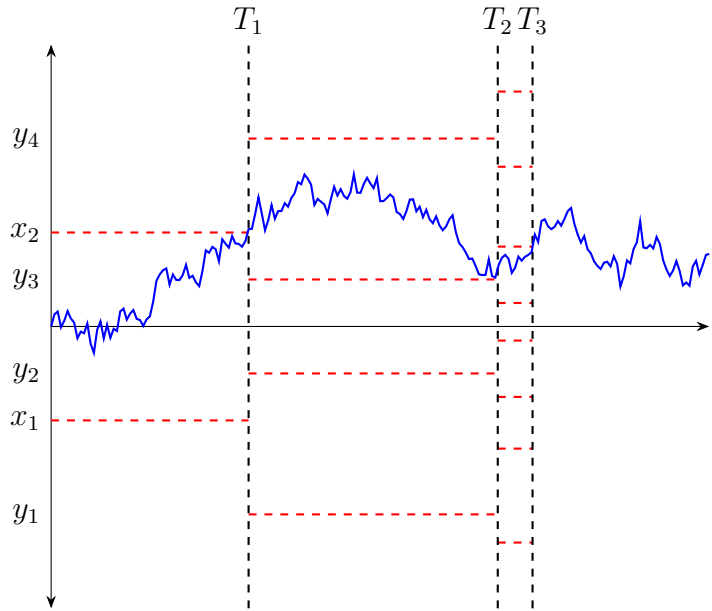

The key insight for this application of our framework is that Dubins' meticulously constructed ``dyadic'' partitions of the line are not actually necessary. We will show that any (deterministic) sequence of partitions adding 1 point at a time suffices for the embedding, provided that the information resolution of the partitions asymptotically captures the degree of resolution associated to $\mu$. Applying our framework in the context of generating random partitions from iid sampling, we obtain random Skorokhod embeddings.

The following theorem constructs a Skorokhod embedding for a (deterministic) sequence of partitions.

\begin{thm} \label{partition-skorokhod}
Let $\mu$ be a distribution on $\R$ with mean zero and finite second moment, and let $X \sim \mu$. Let $(x_n)_{n=1}^\infty$ be a sequence of real numbers, and let $\mc F_n \coloneqq \sigma((-\infty,x_1],\dots,(-\infty,x_n])$ define a filtration such that $\mc F_n \uparrow \mc B$, i.e.\ $\bigvee_{n=1}^\infty \mc F_n$ and the Borel $\sigma$-field $\mc B$ differ only by $\mu$-null sets. There exists a stopping time $T(x_1,x_2,\dots)$ for Brownian motion such that, $\P$-a.s., $B_T \overset{d}{=} X$ and $\E[T] = \E[X^2]$.
\end{thm}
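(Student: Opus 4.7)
The plan is to construct $T$ as the increasing limit of stopping times $T_n$ that realize the Doob martingale $M_n \coloneqq \E[X \mid \mc F_n]$ as $B_{T_n}$. Since $\mc F_n \uparrow \mc B$ and $X \in L^2(\mu)$, Lévy's upward theorem gives $M_n \to X$ both $\mu$-a.s.\ and in $L^2(\mu)$, so $\E[M_n^2] \nearrow \E[X^2]$.

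The inductive step exploits the fact that adjoining $(-\infty, x_n]$ to $\mc F_{n-1}$ splits exactly one atom $A$ of the associated partition into sub-atoms $A', A''$, leaving all other atoms unchanged. On the unchanged atoms $M_n = M_{n-1}$, while on $A$ the two new values straddle $\E[X \mid A]$ by the tower identity
$$\E[X \mid A] = \frac{\mu(A')}{\mu(A)} \E[X \mid A'] + \frac{\mu(A'')}{\mu(A)} \E[X \mid A''].$$
Given $T_{n-1}$ with $B_{T_{n-1}} \overset{d}{=} M_{n-1}$, I set $T_n \coloneqq T_{n-1}$ unless the represented atom is $A$ with $\E[X \mid A'] \neq \E[X \mid A'']$, in which case $T_n$ is the first time after $T_{n-1}$ that $B$ hits one of $\E[X \mid A']$, $\E[X \mid A'']$. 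By the strong Markov property and optional stopping for Brownian motion between two barriers, the hitting probabilities reproduce the conditional law of $M_n$ given $M_{n-1}$, so inductively $B_{T_n} \overset{d}{=} M_n$. The Wald-type identity $\E[T_n] = \E[B_{T_n}^2] = \E[M_n^2]$ then follows from $B_t^2 - t$ being a martingale together with the orthogonality of the increments $T_n - T_{n-1}$.

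To pass to the limit, $(T_n)$ is nondecreasing by construction and $\E[T_n] = \E[M_n^2] \nearrow \E[X^2] < \infty$, so monotone convergence produces an a.s.-finite stopping time $T \coloneqq \lim_n T_n$ with $\E[T] = \E[X^2]$. Continuity of $B$ then gives $B_{T_n} \to B_T$ $\P$-a.s., and combining with $M_n \to X$ a.s.\ under $\mu$ yields $B_T \overset{d}{=} X$ via bounded continuous test functions.

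The main subtlety lies in the bookkeeping in the inductive step: since distinct atoms of $\mc F_{n-1}$ can have equal conditional expectations, the numerical value $B_{T_{n-1}}$ alone need not identify which atom of the partition the path currently represents, so one must either carry an auxiliary label along or, equivalently, work on a suitable enlargement of the probability space whose enlarged filtration is still a Brownian filtration. Once this bookkeeping is in place, the construction is a routine induction, and the hypothesis $\mc F_n \uparrow \mc B$ is precisely what lifts the finite-step embeddings for $M_n$ to an embedding for $X$.
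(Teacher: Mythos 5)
Your proposal is correct and follows essentially the same route as the paper: embed the Doob martingale $\E[X \mid \mc F_n]$ in Brownian motion via successive barrier hitting times at its values, use Wald's identity and Jensen/$L^2$ bounds to get $\E[T_n] = \E[(\E[X \mid \mc F_n])^2] \leq \E[X^2]$, and pass to the limit with monotone convergence, continuity of paths, and martingale convergence under the hypothesis $\mc F_n \uparrow \mc B$. The only differences are minor: you obtain $\E[T] = \E[X^2]$ directly from $L^2$ martingale convergence rather than re-deriving it from $B_T \overset{d}{=} X$ at the end, and you explicitly handle the atom-identification bookkeeping (distinct atoms with equal conditional expectations) that the paper's conditioning on the numerical value $B_{T_n} = x$ quietly glosses over, which is a worthwhile clarification rather than a departure.
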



\begin{proof}
Define a sequence of stopping times by $T_0 = 0$ and $T_{n+1} = \inf \{ t > T_n : B_t \in \on{ran}(\E[X \mid \mc F_n]) \}$. Then $T_0 \leq T_1 \leq T_2 \leq \cdots$, so there exists a (possibly infinite) stopping time $T = \lim_{n \to \infty} T_n$. Moreover, $B_{T_n} \overset{d}{=} \E[X \mid \mc F_n]$ for each $n$, as $B_{T_{n+1}} \mid B_{T_n} = x$ is equal to or supported on the same two points as $\E[X \mid \mc F_{n+1}] \mid \E[X \mid \mc F_n] = x$, and
$$\E[B_{T_{n+1}} \mid B_{T_n} = x] = x = \E[\E[X \mid \mc F_{n+1}] \mid \E[X \mid \mc F_n] = x].$$
The latter equality is due to the fact that $\E[\E[X \mid \mc F_{n+1}] \mid \mc F_n] = \E[X \mid \mc F_n]$.

This lets us bound the size of $T_n$, as
$$\E[T_n] = \E[\E[B_{T_n}^2 \mid T_n]] = \E[B_{T_n}^2] = \E[(\E[X \mid \mc F_n])^2] \leq \E[X^2],$$
where we have used the tower property of conditional expectation and the conditional version of Jensen's inequality. By the monotone convergence theorem, $\E[T] \leq \E[X^2]$, from which we conclude that $T < \infty$ a.s. Now, by Theorem~\ref{classical R^n G-C} and the martingale convergence theorem, $\E[X \mid \mc F_n]$ converges in distribution to $X$. By the continuity of Brownian motion paths, $B_{T_n}$ converges in distribution to $B_T$. Thus, we may conclude that $B_T \overset{d}{=} X$, from which we conclude
\begin{equation*}
\E[T] = \E[\E[B_T^2 \mid T]] = \E[B_T^2] = \E[X^2].\qedhere
\end{equation*}
\end{proof}

\begin{cor}[Randomized Skorokhod embedding] \label{randomized-skorokhod}
Let $\mu$ be a distribution on $\R$ with mean zero and finite second moment, and let $X, X_1,X_2,\dots \overset{\on{iid}}{\sim} \mu$. There exists a randomized (depending on $X_1,X_2,\dots$) stopping time $T$ for Brownian motion such that, $\P$-a.s., $B_T \overset{d}{=} X$ and $\E[T \mid X_1,X_2,\dots] = \E[X^2]$.
\end{cor}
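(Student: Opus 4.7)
The plan is to apply Theorem~\ref{partition-skorokhod} pathwise with the (random) sequence $x_n = X_n$. First I would verify the hypothesis: Theorem~\ref{classical R^n G-C} applied with $d = 1$ guarantees that, $\P$-a.s.\ in the sampling space $(\Omega,\mc G,\P)$, the empirical $\sigma$-fields $\mc F_n = \sigma((-\infty,X_1],\ldots,(-\infty,X_n])$ satisfy $\bigvee_{n=1}^\infty \mc F_n$ and $\mc B$ differ only by $\mu$-null sets. Let $\Omega_0 \subseteq \Omega$ denote this event of full probability.

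Next, I would work on a product probability space carrying both the iid sample $(X_n)$ and an independent Brownian motion $B$. For each $\omega \in \Omega_0$, the deterministic sequence $(X_n(\omega))_{n \geq 1}$ satisfies the hypothesis of Theorem~\ref{partition-skorokhod}, so I can form the inductively-defined stopping times $T_0 = 0$ and
$$T_{n+1} = \inf\{ t > T_n : B_t \in \on{ran}(\E[X \mid \mc F_n]) \},$$
and set $T(\omega) \coloneqq \lim_{n \to \infty} T_n(\omega)$. On $\Omega_0^c$, which is $\P$-null, I define $T$ arbitrarily (e.g.\ $T = 0$). By Theorem~\ref{partition-skorokhod}, $T < \infty$ a.s., $B_T \overset{d}{=} X$, and $\E[T] = \E[X^2]$ when the barriers $X_1,X_2,\ldots$ are regarded as fixed; that is, these conclusions hold for the conditional law given the sample, so
$$\P(B_T \in \cdot \mid X_1,X_2,\ldots) = \mu \quad \text{and} \quad \E[T \mid X_1,X_2,\ldots] = \E[X^2] \quad \text{$\P$-a.s.}$$
Taking expectations over the sample randomness gives $B_T \overset{d}{=} X$ unconditionally.

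The main technical point — and the only real obstacle — is ensuring that $T$ is genuinely a stopping time for a well-defined filtration on the joint probability space, so that the quadratic variation identity $\E[B_{T_n}^2 \mid T_n] = T_n$ used inside Theorem~\ref{partition-skorokhod} really applies. This is handled by enlarging the Brownian filtration to include $\sigma(X_1,X_2,\ldots)$ (with respect to which the Brownian motion remains a martingale by independence): each $T_n$ is then a stopping time in this enlarged filtration, being the hitting time of a $\sigma(X_1,\ldots,X_n)$-measurable finite set, and Theorem~\ref{partition-skorokhod} applies verbatim conditional on $(X_n)$. The remaining properties follow immediately by the tower property.
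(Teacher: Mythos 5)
Your proposal is correct and takes essentially the same approach as the paper: invoke Theorem~\ref{classical R^n G-C} to get $\mc F_n \uparrow \mc B$ almost surely, then apply Theorem~\ref{partition-skorokhod} pathwise in the sample. You spell out the conditioning and filtration-enlargement bookkeeping that the paper leaves implicit, but the substance of the argument is identical.
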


\begin{proof}
We apply Theorem~\ref{partition-skorokhod} to the sequence of empirical $\sigma$-fields given by $\mc F_n \coloneqq \sigma((-\infty,X_1], \dots, (-\infty,X_n])$. Theorem~\ref{classical R^n G-C} shows that $\mc F_n \uparrow \mc B$.
\end{proof}

\begin{rem}
It is not necessary for $X_1,X_2,\dots$ to be sampled from the same measure as $X$. Theorem~\ref{randomized-skorokhod} still holds if we sample $X_1,X_2,\dots \overset{\on{iid}}{\sim} \nu$, provided that $\supp \nu \supseteq \supp \mu$. This has the interesting consequence that there exist \emph{universal} generating measures for randomized Skorokhod embeddings. For example, if $\nu$ is the standard normal distribution (or any other measure with full support), then sampling $X_1,X_2,\dots \overset{\on{iid}}{\sim} \nu$ generates a randomized Skorokhod embedding construction which is valid for any $\mu$.
\end{rem}

This construction yields different Skorokhod embeddings for each sequence of values $X_1,X_2,\dots$. See Figure~\ref{skorokhod-sim} for a simulation comparing Dubins' classical Skorokhod embedding and two independent randomized Skorokhod embeddings on the same Brownian motion.

\begin{figure}[h]
\begin{center}
\includegraphics[scale = 0.75]{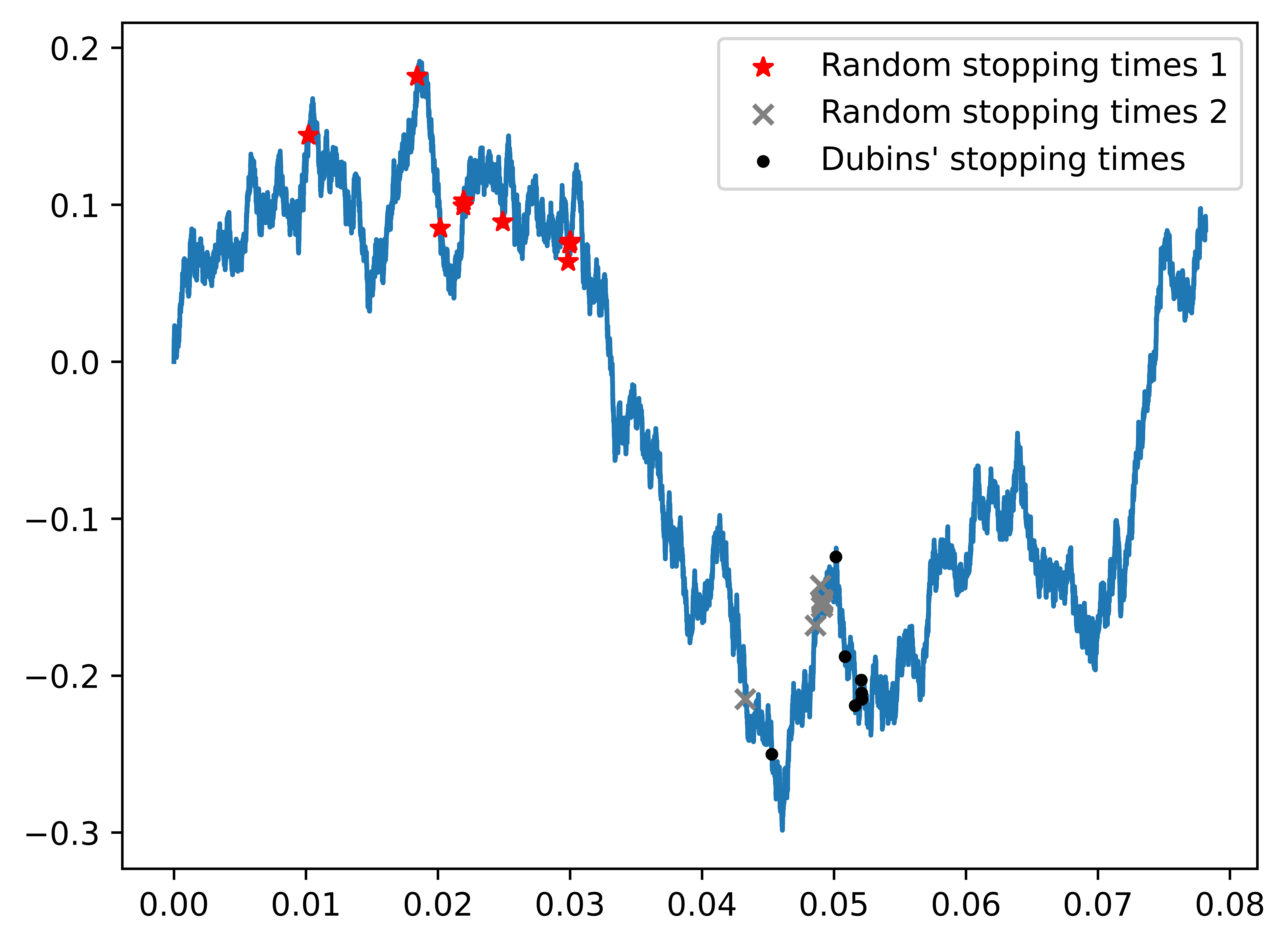}
\end{center}
\caption{Stopping times for Dubins' embedding and two independent randomized embeddings on the same Brownian motion. Here, we are embedding the uniform distribution on $[-0.5,0.5]$.}
\label{skorokhod-sim}
\end{figure}

\subsection{Random splitting random forests}

Our second example application of this framework is to obtain uniform risk bounds for randomized regression trees in a random forest. Random forest models \cite{breiman2001random} are popular machine learning tools for tasks such as classification and regression. In the case of regression, the model constructs a number of regression trees, with splits determined by some optimal choice of splitting along a randomly selected subset of the feature coordinates; see Figure~\ref{regression-splits} for an illustration of splitting the feature space. Then, within each box of the feature space, the model reports the average of the values of the data points in that box.

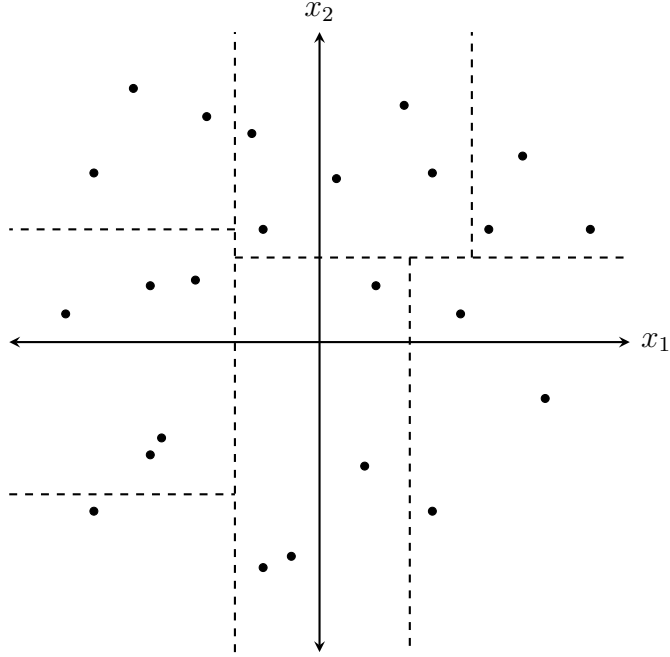
\begin{figure}[h]
\begin{center}
\begin{tikzpicture}[scale = 0.75]

  \draw[<->, thick, >=stealth] (-5.5,0) -- (5.5,0) node[right] {$x_1$};
  \draw[<->, thick, >=stealth] (0,-5.5) -- (0,5.5) node[above] {$x_2$};

  \foreach \x/\y in {
    -4/3, -3/-2, -2/4, -1/-4, 1/1, 2/-3, 3/2, 4/-1, 
    -4/-3, -1/2, 2/3, -3/1, 1.5/4.2, -2.8/-1.7, 
    3.6/3.3, -4.5/0.5, 0.8/-2.2, -1.2/3.7, 2.5/0.5, 
    -3.3/4.5, 4.8/2, -0.5/-3.8, -2.2/1.1, 0.3/2.9
  } {
    \filldraw[black] (\x,\y) circle (2pt);
  }

  \draw[dashed, thick] (-1.5,-5.5) -- (-1.5,5.5);
  \draw[dashed, thick] (2.7,1.5) -- (2.7,5.5);
  \draw[dashed, thick] (1.6,1.5) -- (1.6,-5.5);

  \draw[dashed, thick] (-5.5,-2.7) -- (-1.5,-2.7);
  \draw[dashed, thick] (-1.5,1.5) -- (5.5,1.5);
  \draw[dashed, thick] (-1.5,2) -- (-5.5,2);

\end{tikzpicture}
\end{center}
\caption{Axis-parallel splits of the feature space in a regression tree.}
\label{regression-splits}
\end{figure}

The key facet relating regression trees to our considerations is that a regression tree is essentially reporting the conditional expectation with respect to a partition of the feature space. From this perspective, we build our tree by refining the partition, i.e.\ by increasing the resolution of the associated $\sigma$-field. So we can study the error incurred in building our tree via convergence of the $\sigma$-fields representing these partitions.

For a regression tree, even with an infinite amount of data, performance is bottlenecked by the coarseness of the resolution. Here, we use the notion of information resolution to address the following question: given infinite data, how does the error decay as the resolution becomes finer? While we focus on the infinite-data setting for simplicity, similar ideas could be used to study the trade-off between sample size and resolution.

We can alter the standard random forest model by constructing regression trees using \emph{random splits}, similarly to the Extra-Trees algorithm from \cite{cite-key}. That is, we pick random points $G_1,\dots,G_m \overset{\on{iid}}{\sim} \nu$ and construct a partition from these points. For example, we could construct a grid using all axis-parallel lines passing through $G_1,\dots,G_m$, or we could use an asymmetric partition such as the one in Theorem~\ref{uniform rates}; Figure~\ref{random-splits} illustrates this variant of random splits.

\begin{figure}[h]
\begin{center}
\begin{tikzpicture}[scale = 0.75]

  \draw[<->, thick, >=stealth] (-5.5,0) -- (5.5,0) node[right] {$x_1$};
  \draw[<->, thick, >=stealth] (0,-5.5) -- (0,5.5) node[above] {$x_2$};

  \foreach \x/\y in {
    -4/3, -3/-2, -2/4, -1/-4, 1/1, 2/-3, 3/2, 4/-1, 
    -4/-3, -1/2, 2/3, -3/1, 1.5/4.2, -2.8/-1.7, 
    3.6/3.3, -4.5/0.5, 0.8/-2.2, -1.2/3.7, 2.5/0.5, 
    -3.3/4.5, 4.8/2, -0.5/-3.8, -2.2/1.1, 0.3/2.9
  } {
    \filldraw[black] (\x,\y) circle (2pt);
  }
  
  \draw[dashed, thick] (1.2,0.7) -- (-5.5,0.7);
  \draw[dashed, thick] (1.2,0.7) -- (1.2,-5.5);
  
  \filldraw[gray](1.2,0.7) circle (3pt);
  \draw[black](1.2,0.7) circle (3pt);

  \draw[dashed, thick] (1.8,4.3) -- (-5.5,4.3);
  \draw[dashed, thick] (1.8,4.3) -- (1.8,-5.5);
  
  \filldraw[gray](1.8,4.3) circle (3pt);
  \draw[black](1.8,4.3) circle (3pt);

  \draw[dashed, thick] (5.1,-2.8) -- (-5.5,-2.8);
  \draw[dashed, thick] (5.1,-2.8) -- (5.1,-5.5);
  
  \filldraw[gray](5.1,-2.8) circle (3pt);
  \draw[black](5.1,-2.8) circle (3pt);

  \draw[dashed, thick] (-2,3.2) -- (-5.5,3.2);
  \draw[dashed, thick] (-2,3.2) -- (-2,-5.5);
  
  \filldraw[gray](-2,3.2) circle (3pt);
  \draw[black](-2,3.2) circle (3pt);

\end{tikzpicture}
\end{center}
\caption{Splitting the feature space using random points for an asymmetric partition.}
\label{random-splits}
\end{figure}
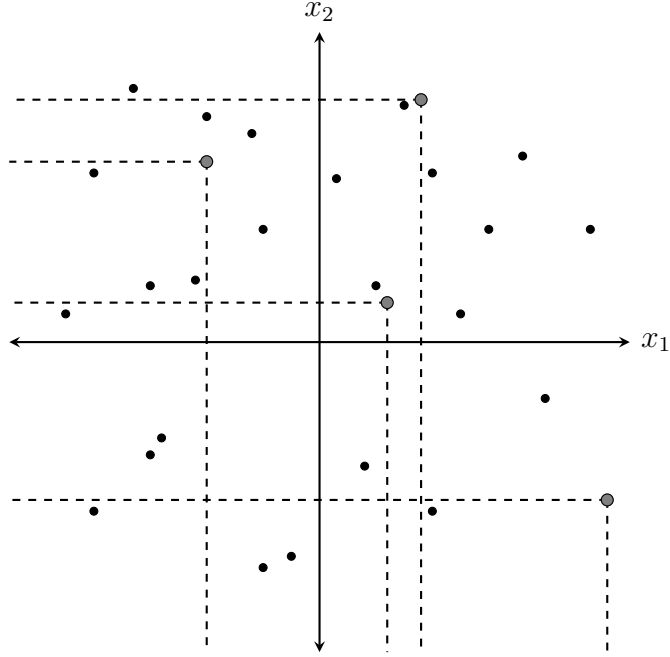

In this setting, our Theorem~\ref{uniform rates} essentially immediately provides a bound on the risk, where the parameter $f$ can even be chosen adversarially against our regression tree estimator. For simplicity, we will treat the case of the partitions from Theorem~\ref{uniform rates} and Theorem~\ref{uniform rates faster}, but the same analysis could be carried out with other choices of randomized sets $A_y$.

\begin{thm}[Random splitting regression tree loss] \label{tree loss thm}
Let $(X_i,Y_i)_{i=1}^N$ be drawn iid according to $Y = f(X) + \varepsilon$, where $\varepsilon$ is independent of $X$ with $\E[\varepsilon] = 0$ and $\Var(\varepsilon) = \sigma^2$. Draw $(G_k)_{1 \leq k \leq m} \overset{\on{iid}}{\sim} \nu$ with $\gamma^{-1} < \frac{d\nu}{d\lambda} < \gamma$ for some $\gamma \geq 1$, define $\mc F_m \coloneqq \sigma(A_{G_{1}},\dots,A_{G_{m}})$ with $A_y \coloneqq \{ x : x_i \leq y_i \: \forall 1 \leq i \leq d\}$, and define the random splitting regression tree estimator
$$\wh f(x) \coloneqq \frac{1}{|R_x|} \sum_{i : X_i \in R_x} Y_i,$$
where $R_x$ is the set containing $x$ in the finest partition given by $\mc F_m$. Then
$$\limsup_{N \to \infty} \sup_{\| f \|_{\on{Lip}}\leq 1} \E \squa{\| \wh f - f \|_{L^1(\mu)}} \ls\paren{\frac{\log m}{m}}^{1/d}.$$
If we use $\wt {\mc F_m} \coloneqq \sigma(\{ x : x_i \leq X_{j,i}\} : 1 \leq i \leq d, 1 \leq j \leq m  )$ in place of $\mc F_m$, then 
$$\limsup_{N \to \infty} \sup_{\| f \|_{\on{Lip}}\leq 1} \E \squa{\| \wh f - f \|_{L^1(\mu)}} \ls \frac{\sqrt d}{m}.$$
\end{thm}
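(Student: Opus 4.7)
The approach is to insert the ($\mu$-)conditional expectation $\E[f \mid \mc F_m]$ on the random partition as an intermediary and apply the triangle inequality:
$$\|\wh f - f\|_{L^1(\mu)} \leq \|\wh f - \E[f \mid \mc F_m]\|_{L^1(\mu)} + \|\E[f \mid \mc F_m] - f\|_{L^1(\mu)}.$$
The second term is precisely the quantity controlled (in expectation, uniformly over $\|f\|_{\on{Lip}} \leq 1$) by Theorem~\ref{uniform rates}, giving a bound of order $(\log m/m)^{1/d}$ (or $\sqrt d/m$ when we instead use $\wt{\mc F_m}$, by Theorem~\ref{uniform rates faster}). The proof therefore reduces to showing that the first term's expectation tends to $0$ as $N \to \infty$, at a rate that is uniform over the Lipschitz unit ball.

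For the first term, note that both $\wh f$ and $\E[f \mid \mc F_m]$ are constants on each cell $R_j$ of the partition generated by $\mc F_m$. Writing $n_j \coloneqq \#\{i : X_i \in R_j\}$ and conditioning on the split points $(G_k)$ and on $(n_j)$, the within-cell error decomposes as
$$\wh f|_{R_j} - \E[f \mid R_j] = \frac{1}{n_j} \sum_{i : X_i \in R_j} \bigl( f(X_i) - \E[f \mid R_j] \bigr) + \frac{1}{n_j} \sum_{i : X_i \in R_j} \varepsilon_i,$$
both summands being centered averages of iid bounded-variance terms, hence with variance of order $1/n_j$ depending only on $\sigma^2$ and $\|f\|_\infty$. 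On the bounded domain $[0,1]^d$, a $1$-Lipschitz $f$ satisfies $\|f\|_\infty \leq |f(0)| + \sqrt d$; and since adding a constant to $f$ shifts $\wh f$ and $\E[f \mid \mc F_m]$ identically, we may assume $f(0)=0$ without loss of generality, so the variance bound is uniform over the Lipschitz ball.

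Since $\mc F_m$ has (almost surely) only finitely many cells for fixed $m$, the strong law of large numbers gives $n_j \to \infty$ a.s.\ as $N \to \infty$ on every cell with $\mu(R_j) > 0$. Summing the per-cell $L^1$ errors weighted by $\mu(R_j)$ and taking expectations, the bounded convergence theorem yields
$$\lim_{N \to \infty} \, \sup_{\|f\|_{\on{Lip}} \leq 1} \, \E \bigl[ \|\wh f - \E[f \mid \mc F_m]\|_{L^1(\mu)} \bigr] = 0.$$
Combining with the bound from Theorem~\ref{uniform rates} (respectively Theorem~\ref{uniform rates faster}) on the second term gives the claim.

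The main technical obstacle is checking that the convergence in the first term is uniform over $f$, since the $\sup_f$ sits inside the $\limsup_N$. This is the reason for the preceding reduction to Lipschitz functions with $f(0)=0$: it makes the variance bound per cell completely $f$-free, so the uniform decay follows from a finite-cell argument. A secondary, purely bookkeeping point is that Theorem~\ref{uniform rates} is stated for the sampling measure coinciding with the measure against which the $L^1$ norm is taken; this poses no difficulty here because $\nu$ (the law of the $G_k$) has density bounded between $\gamma^{-1}$ and $\gamma$, so the hypotheses of Theorem~\ref{uniform rates}/\ref{uniform rates faster} apply to the partition $\mc F_m$ and one can translate the resulting bound to $L^1(\mu)$ absorbing the density ratio into the implicit constant.
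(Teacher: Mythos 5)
Your proposal is correct and shares the paper's skeleton—insert $\E[f \mid \mc F_m]$, apply the triangle inequality, and quote Theorem~\ref{uniform rates} (resp.\ Theorem~\ref{uniform rates faster}) for the approximation term—but your treatment of the estimation term $\| \wh f - \E[f \mid \mc F_m]\|_{L^1(\mu)}$ is genuinely different. The paper bounds that term by $\sum_{R \in \mc P_m} \mu(R)\diam(R)$ using the Lipschitz property of $f$ (each $f(X_i)$ with $X_i \in R_x$ is within $\diam(R_x)$ of any value of $f$ on $R_x$), and then reuses the diameter estimates from the proof of Theorem~\ref{uniform rates}, so this term contributes another $O((\log m/m)^{1/d})$ rather than vanishing; note, however, that the paper's displayed inequality silently drops the noise averages $\tfrac{1}{|R_x|}\sum_{i: X_i \in R_x}\varepsilon_i$, which at finite $N$ cannot be absorbed into a diameter bound and really do require an $N \to \infty$ law-of-large-numbers step. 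Your route handles exactly this: conditioning on the partition and the cell counts, both the within-cell fluctuation of $f(X_i)$ and the noise average are centered with variance $O(1/n_j)$ (uniformly over the Lipschitz ball after the harmless normalization $f(0)=0$), and bounded convergence over the finitely many cells sends the whole term to zero as $N \to \infty$. What each buys: the paper's argument recycles the geometric machinery and needs nothing beyond non-empty cells, while yours is more careful about the noise, is uniform in $f$ by an $f$-free bound, and makes the $\wt{\mc F_m}$ case immediate (the estimation term vanishes regardless of partition geometry, so the $\sqrt d/m$ rate comes solely from Theorem~\ref{uniform rates faster}), whereas the paper's "the proof for $\wt{\mc F_m}$ is similar" would require redoing a diameter estimate. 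Your closing remark about $\nu$ versus $\mu$ is apt; the theorem statement never pins down $\mu$, and both your argument and the paper's implicitly need $\mu$ comparable to Lebesgue (or to $\nu$) for the $L^1(\mu)$ translation, so this is an ambiguity of the statement rather than a defect of your proof.
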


\begin{rem}
We only take the limit as $N \to \infty$ (infinitely many samples) to guarantee that every set in the partition of the feature space a.s.\ contains at least 1 data point (so that $\wh f$ is well-defined). Depending on the choice of sets $A_y$ (and their ensuing geometry), one may calculate the relationship between $N$ and $m$ to ensure that with high probability, no partition set is empty.
\end{rem}

\begin{proof}
We will treat the case of $\mc F_m$; the proof for $\wt{\mc F_m}$ is similar. In taking the limit as $N \to \infty$, we may assume that all grid boxes contain at least one $X_i$, so that $\wh f$ is well-defined. Then, using the triangle inequality,
\begin{align*}
\limsup_{N \to \infty} \sup_{\| f \|_{\on{Lip}}\leq 1} \E \squa{ \| \wh f - f \|_{L^1(\mu)}} &\leq \limsup_{N \to \infty} \sup_{\| f \|_{\on{Lip}}\leq 1} \E \squa{ \| \wh f - \E[f \mid \mc F_m] \|_{L^1(\mu)}} \\
&\qquad + \limsup_{N \to \infty} \sup_{\| f \|_{\on{Lip}}\leq 1} \E \squa{\| \E[f \mid \mc F_m] - f \|_{L^1(\mu)}}
\end{align*}
Theorem~\ref{uniform rates} upper bounds the latter term by $O \paren{\paren{\frac{\log m}{m}}^{1/d}}$. The former term can be controlled by noting that for any $f$ with $\| f \|_{\on{Lip}}\leq 1$,
\begin{align*}
\| \wh f - \E[f \mid \mc F_m] \|_{L^1(\mu)} &\leq \int \frac{1}{|R_x|} \sum_{i : X_i \in R_x} \abs{f(X_i) - \frac{1}{\mu(R_x)} \int_{R_x} f(y) \, d\mu(y)} \, d\mu(x) \\
&\leq \int \frac{1}{|R_x|} \sum_{i : X_i \in R_x} \frac{1}{\mu(R_x)} \int_{R_x} |f(X_i) - f(y)| \, d\mu(y) \, d\mu(x) \\
&\leq \int \diam(R_x) \, d\mu(x) \\
&= \sum_{R \in \mc P_m} \mu(R) \diam(R),
\end{align*}
where $\mc P_m$ denotes the finest partition given by $\mc F_m$. Bounding this quantity as in the proof of Theorem~\ref{uniform rates}, we get that the second term is $O \paren{\paren{\frac{\log m}{m}}^{1/d}}$, as claimed.
\end{proof}

By averaging independently randomized regression trees, one may construct random forests without the need for bootstrap aggregation, optimizing the split points, or random selection of features. Figure~\ref{random-splitting-random-forest} compares the performance of such random splitting random forests (with 10 trees, using asymmetric and symmetric partitions) on the California housing dataset, originally introduced in \cite{KELLEYPACE1997291} and now available through the scikit-learn library, as the number of random splits increases. As predicted by Theorem~\ref{tree loss thm}, the symmetric partition requires vastly fewer random split points to make accurate predictions.

\begin{figure}[h]
\begin{center}
\includegraphics[scale = 0.75]{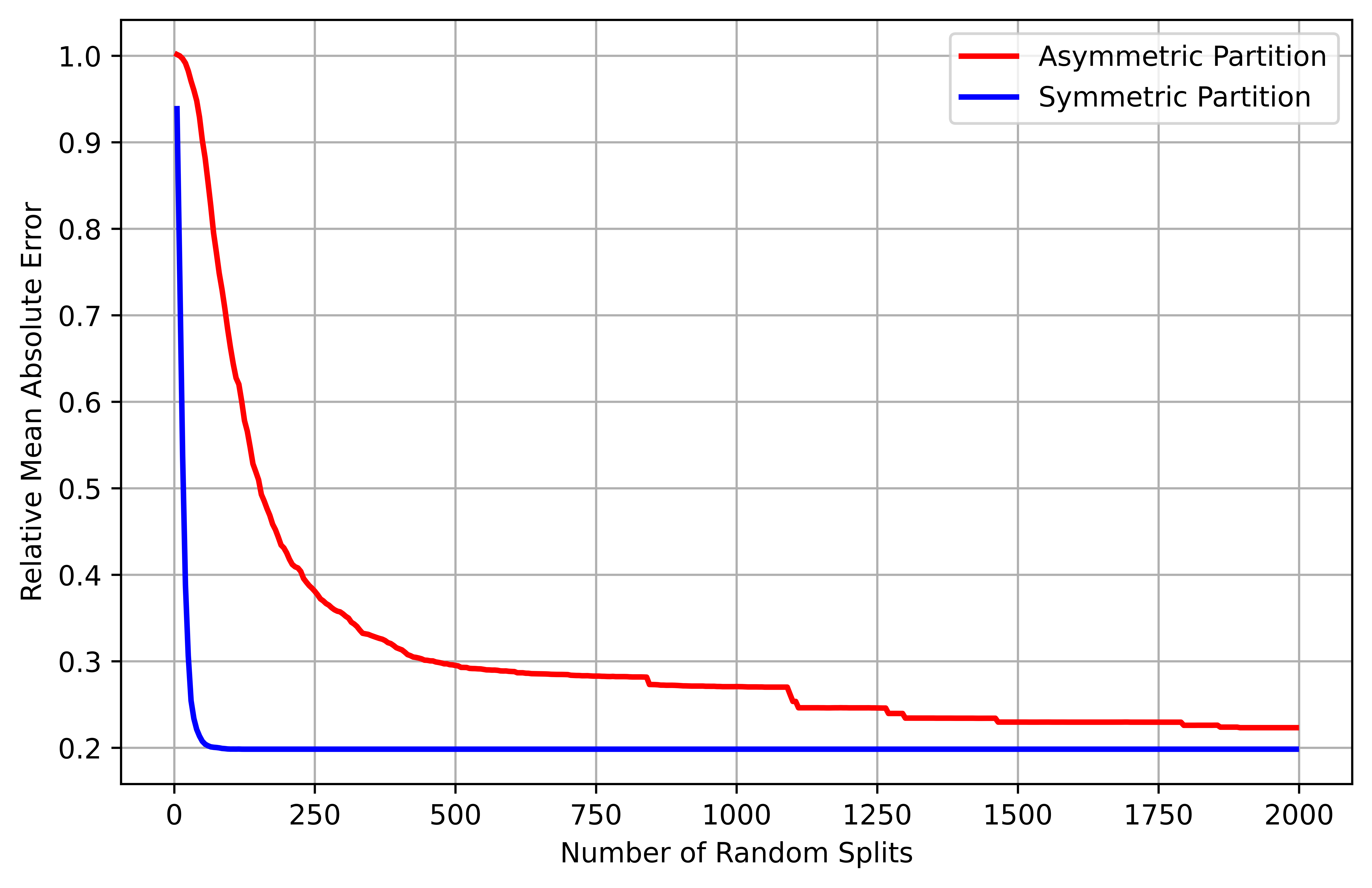}
\end{center}
\caption{Performance of asymmetric and symmetric random splitting random forests for predicting California housing prices.}
\label{random-splitting-random-forest}
\end{figure}

\paragraph*{Acknowledgements.} The author would like to thank Sinho Chewi, Steve Evans, Shirshendu Ganguly, Arvind Prasadan, and Jo\~{a}o Vitor Romano for many helpful comments and conversations. While writing this paper, the author was supported by a Two Sigma PhD Fellowship and a research contract with Sandia National Laboratories, a U.S.\ Department of Energy multimission laboratory.

\pagebreak


\bibliographystyle{alpha}
\bibliography{Information-Resolution-LLNs}

\appendix

\section{Proofs of Theorems~\ref{uniform rates} and \ref{uniform rates faster}} \label{uniform_appendix}

\uniformrates*

We first reduce the problem to the geometric problem of constructing a fine mesh partition of the support of $\mu$.

\begin{lem} \label{bound L^1 by diameter}
Fix the values of $X_1,X_2,\dots$, and denote $\mc P_n$ the finest partition given by the $\sigma$-field $\mc F_n$ (omitting any $\mu$-null sets). Then
$$\sup_{\| f \|_{\on{Lip}}\leq 1} \| \E[f \mid \mc F_n] - f \|_{L^1(\mu)} \leq \sum_{A \in \mc P_n} \mu(A) \diam(A),$$
where $\diam(A) \coloneqq \sup \{ |x-y| : x,y \in A\}$.
\end{lem}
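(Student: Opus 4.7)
The plan is to unpack the conditional expectation concretely using the partition structure, then apply the Lipschitz property pointwise on each piece of the partition.

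First, I would note that since $\mc F_n$ is generated by the (at most countable) partition $\mc P_n$ up to $\mu$-null sets, the conditional expectation $\E[f \mid \mc F_n]$ is, $\mu$-almost everywhere, the function constant on each $A \in \mc P_n$ with value $\frac{1}{\mu(A)} \int_A f \, d\mu$ (ignoring any null $A$ where the formula is undefined, since they contribute nothing to the $L^1(\mu)$ norm). Using this, I would write
$$\| \E[f \mid \mc F_n] - f \|_{L^1(\mu)} = \sum_{A \in \mc P_n} \int_A \abs{\frac{1}{\mu(A)} \int_A f(y) \, d\mu(y) - f(x)} \, d\mu(x).$$

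Next I would pull $f(x)$ inside the inner integral (rewriting it as an average of $f(x)$ against $d\mu(y)/\mu(A)$) and apply the triangle inequality for integrals:
$$\leq \sum_{A \in \mc P_n} \int_A \frac{1}{\mu(A)} \int_A |f(y) - f(x)| \, d\mu(y) \, d\mu(x).$$
For 1-Lipschitz $f$, the integrand satisfies $|f(y) - f(x)| \leq |y - x| \leq \diam(A)$ whenever $x, y \in A$. Substituting this bound and carrying out the trivial inner integrations gives $\sum_{A \in \mc P_n} \mu(A) \diam(A)$, which is independent of $f$, so the supremum over 1-Lipschitz functions obeys the same bound.

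I expect no real obstacle here; the only minor care is handling potentially null atoms of the partition (for which the conditional expectation formula is vacuous) and verifying that sums/integrals may be swapped, which is immediate by Tonelli since all integrands are nonnegative.
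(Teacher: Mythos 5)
Your proof is correct and follows essentially the same route as the paper's: write $\E[f\mid\mc F_n]$ as the piecewise average over the partition $\mc P_n$, apply the triangle inequality inside each cell to get the double average of $|f(y)-f(x)|$, and bound it by $\diam(A)$ using the Lipschitz property. Your added remarks about null atoms and Tonelli are fine but not substantively different from the paper's argument.
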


\begin{proof}[Proof of Lemma~\ref{bound L^1 by diameter}]
\begin{align*}
\sup_{\| f \|_{\on{Lip}}\leq 1} \| \E[f \mid \mc F_n] - f \|_{L^1(\mu)} &= \sup_{\| f \|_{\on{Lip}}\leq 1} \int_{[0,1]^d} \abs{ \sum_{A \in \mc P_n} \E[ f \mid A] \mbbm 1_A(x) - f(x) } \, d\mu(x) \\
&\leq \sup_{\| f \|_{\on{Lip}}\leq 1}  \sum_{A \in \mc P_n} \int_{A} | \E[ f \mid A] - f(x)| \, d\mu(x) \\
&\leq \sup_{\| f \|_{\on{Lip}}\leq 1}  \sum_{A \in \mc P_n} \frac{1}{\mu(A)} \int_{A} \int_A |f(y) - f(x)| \, d\mu(y) \, d\mu(x) \\
&\leq \sum_{A \in \mc P_n} \mu(A) \diam(A).\qedhere
\end{align*}
\end{proof}

To bound the diameter, we use a slightly modified version of the approach taken for the proof of Lemma 40 in \cite{manole2021plugin}, which essentially uses a covering argument phrased in terms of Vapnik-Chervonenkis dimension.

\begin{proof}[Proof of Theorem~\ref{uniform rates}]
We first prove the $L^1(\P)$-convergence rate bound. Fix $0 < \delta < 1$, and consider a mesh dividing $[0,1]^d$ into cubes $C$ of side length $\varepsilon = (\frac{ \gamma \log(n/\delta)}{n})^{1/d}$. Then, with probability $\geq 1-\delta$, each cube in the mesh contains some sample point $X_i$ with $1 \leq i \leq n$ because
\begin{align*}
\P(\text{some cube has no samples}) &\leq \sum_{C}  \P(\text{$C$ has no samples}) \\
&= \sum_C (1 - \mu(C))^n \\
&\leq \sum_C (1 - \gamma^{-1} \varepsilon^d)^n \\
&= (1/\varepsilon)^d (1 - \gamma^{-1} \varepsilon^d)^n \\
&=  \frac{n}{\gamma \log(n/\delta)}  \paren{1 - \frac{ \log(n/\delta)}{n}}^n \\
&\leq  \frac{n}{\gamma \log(n/\delta)}  \exp (- \log(n/\delta)) \\
&= \frac{\delta}{\gamma \log(n/\delta)} \\
&\leq \delta.
\end{align*}
To upper bound $\sum_{A \in \mc P_n} \mu(A) \diam(A)$, first note that this quantity is monotonically nonincreasing in $n$, as splitting a set $A$ into multiple pieces cannot increase the diameter of either piece. So it suffices to show a bound on this quantity when we throw away all sample points $X_i$ except for one sample point in each mesh cube $C$. We will do so on the aforementioned probability $\geq 1-\delta$ event.

Excepting the set $L \in \mc P_n$ containing the point $(1,\dots,1)$, the diameter of any set $A \in \mc P_n \setminus \{ L \}$ must be $\leq \varepsilon \sqrt d $. The diameter of the corner set $L$ will be $\leq \sqrt d$ (the diameter of $[0,1]^d$), but on this event, $\lambda(L) \leq d \varepsilon$. Thus, we may bound
\begin{align*}
\sum_{A \in \mc P_n} \mu(A) \diam(A) &\leq \gamma \sum_{A \in \mc P_n} \lambda(A) \diam(A) \\
&= \gamma \lambda(L) \on{diam}(L) + \gamma \sum_{A \in \mc P_n \setminus \{L\}} \lambda(A) \diam(A) \\
&\leq \gamma d^{3/2} \varepsilon + 4 \gamma \varepsilon \sqrt d \sum_{A \in \mc P_n \setminus \{ L\}} \lambda(A) \\
&\leq 2 \gamma d^{3/2} \varepsilon.
\end{align*}
So, if we denote $K =  2 \gamma^{1+1/d} d^{3/2}$, using Lemma~\ref{bound L^1 by diameter} gives
\begin{align*}
\sup_{\| f \|_{\on{Lip}}\leq 1} \| \E[f \mid \mc F_n] - f \|_{L^1(\mu)} &\leq K \paren{\frac{ \log(n/\delta)}{n}}^{1/d} \\
\intertext{with probability $\geq 1- \delta$. Writing $\delta = n \exp(- \frac{u^d n}{K^d})$ for $ u > 0$,}
&= u.
\end{align*}
Thus, applying the argument over all $u > 0$ (that is, varying $\delta$ throughout $(0,1)$), we may estimate
\begin{align*}
\E \squa{\sup_{\| f \|_{\on{Lip}}\leq 1} \| \E[f \mid \mc F_n] - f \|_{L^1(\mu)}} &= \int_0^\infty \P \paren{\sup_{\| f \|_{\on{Lip}}\leq 1} \| \E[f \mid \mc F_n] - f \|_{L^1(\mu)} > u} \, du \\
\shortintertext{Picking a cutoff parameter $t_n = K (\frac{2 \log n}{dn})^{1/d}$,}
&\leq t_n + n\int_{t_n}^\infty \exp \paren{- \frac{u^d n}{K^d} } \, du
\shortintertext{Making the change of variables $v = u^{d/2}$,}
&= t_n + \frac{2n}{d} \int_{t_n^{d/2}}^\infty \exp \paren{- \frac{v^2 n}{K^d} } v^{2/d - 1} \, dv \\
&\ls t_n + n \int_{t_n^{d/2}}^\infty \exp \paren{- \frac{v^2 n}{2K^d} } v  \, dv \\
&= t_n + K^d\exp \paren{- \frac{t_n^d n}{2K^d} } \\
&= K \paren{\frac{2 \log n}{dn}}^{1/d} + \frac{K}{n^{1/d}} \\
&\ls \paren{\frac{\log n}{n}}^{1/d}.
\end{align*}

The $\P$-a.s.\ convergence follows from the $L^1(\P)$ convergence and the fact that $H_n \coloneqq \sum_{A \in \mc P_n} \mu(A) \diam(A)$ is nonincreasing in $n$ for each $\omega \in \Omega$. Indeed, if we let $E = \{ \omega \in \Omega : \limsup_{n \to \infty} H_n(\omega) > 0\}$ then
$$\limsup_{n \to \infty} \E[H_n \mbbm 1_E] \leq \limsup_{n \to \infty} \E[H_n] = 0.$$
But $\limsup_{n \to \infty} \E[H_n \mbbm 1_E] = \E[(\limsup_{n \to \infty}H_n) \mbbm 1_E]  > 0$ if $\P(E) > 0$, so we must have $\P(E) = 0$.
\end{proof}

\uniformratesfaster*

\begin{proof}
As before, it suffices to prove the expectation bound. Let $\mc P_n$ denote the finest partition given by the $\sigma$-field $\wt{\mc F_n}$ (omitting any $\mu$-null sets). We first bound this by the average distance between a point in $[0,1]^d$ and the upper right corner of the partition box it lies in. Then
\begin{align*}
\sup_{\| f \|_{\on{Lip}}\leq 1} \| \E[f \mid \wt{\mc F_n}] - f \|_{L^1(\mu)} &= \sup_{\| f \|_{\on{Lip}}\leq 1} \int_{[0,1]^d} \abs{ \sum_{A \in \mc P_n} \E[ f \mid A] \mbbm 1_A(x) - f(x) } \, d\mu(x) \\
&\leq \sup_{\| f \|_{\on{Lip}}\leq 1}  \sum_{A \in \mc P_n} \int_{A} | \E[ f \mid A] - f(x)| \, d\mu(x) \\
&\leq \sup_{\| f \|_{\on{Lip}}\leq 1}  \sum_{A \in \mc P_n} \frac{1}{\mu(A)} \int_{A} \int_A |f(y) - f(x)| \, d\mu(y) \, d\mu(x) \\
&\leq \gamma^3  \sum_{A \in \mc P_n} \frac{1}{\lambda(A)} \int_{A} \int_A \|y - x\|_2 \, dy \, dx \\
&\leq \gamma^3  \sum_{A \in \mc P_n} \frac{1}{\lambda(A)} \int_{A} \int_A \|y - u^A\|_2 + \| u^A - x \|_2 \, dy \, dx, \\
\shortintertext{where $u^A$ is the upper corner of the set $A$: $u^A_i = \min \{ X_{j,i} : X_{j,i} \geq x_i \: \forall x \in A \}$ for $1 \leq i \leq d$ (and $u^A_i = 1$ if no such points exist).}
&= 2 \gamma^3 \sum_{A \in \mc P_n}  \int_A \|y - u^A\|_2  \, dy \\
&= 2 \gamma^3 \int_{[0,1]^d} \|y - u^{A_y}\|_2  \, dy,
\end{align*}
where $A_y$ denotes the $A \in \mc P_n$ containing $y$. 

Taking expectations and applying Cauchy-Schwarz, we get
\begin{align*}
\E \squa{\sup_{\| f \|_{\on{Lip}}\leq 1} \| \E[f \mid \mc F_n] - f \|_{L^1(\mu)}} &\leq 2 \gamma^3 \E\squa{\int_{[0,1]^d} \|y - u^{A_y}\|_2 \, dy} \\
&\leq 2 \gamma^3 \sqrt{\E\squa{\int_{[0,1]^d} \|y - u^{A_y}\|_2^2 \, dy}} \\
&= 2 \gamma^3\sqrt d \sqrt{\E\squa{\int_{[0,1]^d} (y_1 - u^{A_y}_1)^2 \, dy}}
\shortintertext{Since every $A \in \mc P_n$ is an axis-parallel box, $u^{A_y}_1 = \min \{ X_{j,1} : X_{j,1} \geq y_1 \}$; so the integral depends only on the 1st coordinate. Denoting the order statistics of the values $X_{1,1},\dots,X_{n,1}$ as $0 = Y_0 < Y_1 < \cdots < Y_n < Y_{n+1} = 1$, this is}
&= 2 \gamma^3\sqrt d \sqrt{\sum_{k=0}^n \E \squa{\int_{Y_k}^{Y_{k+1}} (y- Y_{k+1})^2 \, dy}} \\
&= 2 \gamma^3\sqrt d \sqrt{\sum_{k=0}^n \E \squa{\frac{(Y_{k+1}-Y_k)^3}{3}}}
\shortintertext{The distances between successive order statistics of uniform random variables on $[0,1]$ have Beta($1$,$n$) distribution. So this is}
&\ls \sqrt d \sqrt{(n+1) \frac{1}{(n+1)(n+2)(n+3)}} \\
&\leq \frac{\sqrt d}{n}.\qedhere
\end{align*}
\end{proof}

\end{document}